\numberwithin{equation}{section}
\DeclareMathOperator{\con}{con}
\DeclareMathOperator{\simple}{sim}
\DeclareMathOperator{\mul}{mul}
\DeclareMathOperator{\var}{var}
\DeclareMathOperator{\dualM}{\overleftarrow{\mathbf M}}
\newtheorem{theorem}{Theorem}[section]
\newtheorem{lemma}[theorem]{Lemma}
\newtheorem{remark}[theorem]{Remark}
\newtheorem{corollary}[theorem]{Corollary}
\theoremstyle{definition}
\newtheorem{example}[theorem]{Example}
\newtheorem{question}[theorem]{Question}
\renewcommand*\subjclass[2][2010]{\def\@subjclass{#2}\@ifundefined{subjclassname@#1}{\ClassWarning{\@classname}{Unknown edition (#1) of Mathematics Subject Classification; using '2010'.}}{\@xp\let\@xp\subjclassname\csname subjclassname@#1\endcsname}}
\renewcommand{\subjclassname}{\textup{2010} Mathematics Subject Classification}
\begin{document}

\title[On the ascending and descending chain conditions]{On the ascending and descending chain conditions in the lattice of monoid varieties}

\thanks{Supported by the Ministry of Education and Science of the Russian Federation (project 1.6018.2017/8.9) and by Russian Foundation for Basic Research (grant 17-01-00551).}

\author{S.\,V.\,Gusev}

\address{Ural Federal University, Institute of Natural Sciences and Mathematics, Lenina 51, 620000 Ekaterinburg, Russia}

\email{sergey.gusb@gmail.com}

\date{}

\begin{abstract}
In this work we consider monoids as algebras with an associative binary operation and the nullary operation that fixes the identity element. We found an example of two varieties of monoids with finite subvariety lattices such that their join covers one of them and has a continuum cardinality subvariety lattice that violates the ascending chain condition and the descending chain condition.
\end{abstract}

\keywords{monoid, variety, lattice of varieties, ascending chain condition, descending chain condition}

\subjclass{Primary 20M07, secondary 08B15}

\maketitle

\section{Introduction and summary}
\label{introduction}

This paper is devoted to the examination of the lattice $\mathbb{MON}$ of all monoid varieties (referring to monoid varieties, we consider monoids as algebras with an associative binary operation and the nullary operation that fixes the identity element). There are a lot of articles about the monoid varieties. However, these articles are devoted mainly to the examination of identities of monoids. At the same time, although the first results about the lattice $\mathbb{MON}$ was found a long time ago (see~\cite{Head-68,Pollak-81,Wismath-86}), only a little information was recently known about the lattice $\mathbb{MON}$.{\sloppy

}
The situation has recently changed. There are papers, devoted to examination of identities of monoids, that contain also some non-trivial results about the lattice $\mathbb{MON}$ (see~\cite{Jackson-05,Lee-08,Lee-12,Lee-14}, for instance). Several works devoted to the examination of the lattice $\mathbb{MON}$ was published in 2018~\cite{Gusev-18-IzVUZ,Gusev-18-AU,Gusev-Vernikov-18}. In these articles, several restrictions on the lattices of monoid varieties formulated in terms that are somehow connected with lattice identities were studied. At the same time, when studying lattices of varieties of algebras of various types, much attention has been also paid to the \emph{finiteness conditions}, i.e., conditions that hold in every finite lattice (see~\cite[Section~10]{Shevrin-Vernikov-Volkov-09}, for instance). 

The subvariety lattice of a variety $\mathbf V$ is denoted by $L(\mathbf V)$. A variety is called \emph{finitely generated} if it is generated by a finite algebra. In~\cite[Theorems~2.3,~2.4 and~2.5]{Lvov-73} I.V.L'vov proved that for an associative ring variety $\mathbf V$, the following are equivalent:
\begin{itemize}
\item[a)] the lattice $L(\mathbf V)$ is finite;
\item[b)] the lattice $L(\mathbf V)$ satisfies the ascending chain condition;
\item[c)] $\mathbf V$ is finitely generated.
\end{itemize}
A similar result does not hold for group varieties. Indeed, there are only countably many finitely generated group varieties. At the same time, there are uncountably many periodic non-locally finite varieties of groups with subvariety lattice isomorphic to the 3-element chain~\cite{Kozhevnikov-12}. As far as we know, in the group case the question about the equivalence of the claims a) and b) still remains open. In the semigroup case the claim c) is not equivalent to the claims a) and b). This follows from the folklore fact that the 2-element semigroup with zero multiplication with a new identity element adjoined generates a semigroup variety with countably infinitely many subvarieties~\cite[Fig.~5(b)]{Evans-71}). In the semigroup case the claims a) and b) are not equivalent too. This fact follows from the results of the work~\cite{Sapir-91} where an example of semigroup varieties whose subvariety lattice satisfies the descending chain condition but violates the ascending chain condition is given. Also, this example shows that the classes of semigroup varieties whose subvariety lattices are finite or satisfy the descending chain condition are not closed with respect to the join of the varieties and to coverings. The similar questions about the class of semigroup varieties whose subvariety lattices satisfy the ascending chain condition remain open.

In~\cite[Subsection~3.2]{Jackson-Lee-18} two monoid varieties $\mathbf U$ and $\mathbf W$ are exhibited such that the subvariety lattices of both varieties are finite, while the lattice $L(\mathbf U\vee \mathbf W)$ is uncountably infinite and does not satisfy the ascending chain condition. Moreover, it follows from the proof of Theorem~3.4 in~\cite{Jackson-Lee-18} that $L(\mathbf U\vee \mathbf W)$ violates the descending chain condition. So, the classes of monoid varieties whose subvariety lattices are finite, satisfy the descending chain condition or satisfy the ascending chain condition are not closed with respect to the join of the varieties. At the same time, the results of~\cite{Jackson-Lee-18} leave open the question about stability of the these classes of varieties with respect to coverings.

In this work we find two monoid varieties with finite subvariety lattices such that their join covers one of them and has a continuum cardinality subvariety lattice that violates the ascending chain condition and the descending chain condition. Thus, we give a negative answer to the question noted in the previous paragraph.

In order to formulate the main result of the article, we need some definitions and notation. We denote by $F$ the free semigroup over a countably infinite alphabet. As usual, elements of $F$ and the alphabet are called \emph{words} and \emph{letters} respectively. The words and the letters are denoted by small Latin letters. However, the words unlike the letters are written in bold. The symbol $F^1$ stands for the semigroup $F$ with a new identity element adjoined. We treat this identity element as the empty word and denote it by $\lambda$. Expressions like $\mathbf u\approx\mathbf v$ are used for identities, whereas $\mathbf{u=v}$ means that the words $\mathbf u$ and $\mathbf v$ coincide. One can introduce notation for the following three identities:
$$
\sigma_1:\ xyzxty\approx yxzxty,\quad\sigma_2:\ xtyzxy\approx xtyzyx,\quad\sigma_3:\ xzxyty \approx xzyxty.
$$
Note that the identities $\sigma_1$ and $\sigma_2$ are dual to each other. A letter is called \emph{simple} [\emph{multiple}] \emph{in a word} $\mathbf w$ if it occurs in $\mathbf w$ once [at least twice]. Note also that the identity $\sigma_1$ [respectively, $\sigma_2$] allows us to swap the adjacent non-latest [respectively, non-first] occurrences of two multiple letters, while the identity $\sigma_3$ allows us to swap a non-latest occurrence and a non-first occurrence of two multiple letters whenever these occurrences are adjacent to each other. Put
$$
\Phi=\{x^2y\approx yx^2,\,x^2yz\approx xyxzx,\,\sigma_3\}.
$$
The trivial variety of monoids is denoted by $\mathbf T$, while $\mathbf{SL}$ denotes the variety of all semilattice monoids. For an identity system $\Sigma$, we denote by $\var\,\Sigma$ the variety of monoids given by $\Sigma$. Let us fix notation for the following varieties:
\begin{align*}
&\mathbf C=\var\{x^2\approx x^3,\,xy\approx yx\},\enskip\mathbf D_1=\var\{x^2\approx x^3,\,x^2y\approx xyx\approx yx^2\},\\
&\mathbf D_2=\var\{\Phi,\,\sigma_1,\,\sigma_2\},\enskip\mathbf M=\var\{\Phi,\,xyzxy\approx yxzxy,\,\sigma_2\},\enskip\mathbf N=\var\{\Phi,\,\sigma_2\}.
\end{align*}
If $\mathbf V$ is a monoid variety then we denote by $\overleftarrow{\mathbf V}$ the variety \emph{dual to} $\mathbf V$, i.e., the variety consisting of monoids antiisomorphic to monoids from $\mathbf V$.

The main result of the paper is the following

\begin{theorem}
\label{main result}
\begin{itemize}
\item[\textup{(i)}] The variety $\mathbf N\vee \overleftarrow{\mathbf M}$ covers the variety $\mathbf N$.
\item[\textup{(ii)}] The interval $[\mathbf M\vee\overleftarrow{\mathbf M},\mathbf N\vee\overleftarrow{\mathbf M}]$ of the lattice $L(\mathbf N\vee \overleftarrow{\mathbf M})$  contains continuum many subvarieties and does not satisfy the ascending chain condition and the descending chain condition.
\end{itemize}
\end{theorem}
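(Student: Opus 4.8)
The plan is to reduce everything to the combinatorics of words modulo the identity systems at hand. The first block of work is to make precise the deduction calculus suggested by the parenthetical description of $\sigma_1,\sigma_2,\sigma_3$: in a monoid variety containing $\Phi$, the identities $x^2y\approx yx^2$ and $x^2yz\approx xyxzx$ let one freely rearrange occurrences of letters that appear at least three times (and of squares), so that the substantive information carried by a word is the pattern of first and last occurrences of the twice-occurring letters together with the simple letters, while $\sigma_1,\sigma_2,\sigma_3$ act as rewriting rules permuting certain ``interior'' occurrences. Using this I would (i) write down explicit identity bases for $\overleftarrow{\mathbf M}$ (dualize the base of $\mathbf M$, noting that $\overleftarrow{\sigma_2}=\sigma_1$ up to renaming of letters) and for the joins $\mathbf M\vee\overleftarrow{\mathbf M}$ and $\mathbf N\vee\overleftarrow{\mathbf M}$ (whose identity sets are $\mathrm{Id}(\mathbf M)\cap\mathrm{Id}(\overleftarrow{\mathbf M})$ and $\mathrm{Id}(\mathbf N)\cap\mathrm{Id}(\overleftarrow{\mathbf M})$, computable from the calculus), and (ii) obtain a usable description of which identities hold in each of these varieties. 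Note that $\mathbf M\subseteq\mathbf N$, so the interval in part~(ii) of the theorem does sit inside $L(\mathbf N\vee\overleftarrow{\mathbf M})$, and $\mathbf M$ differs from $\mathbf N$ by the single identity $xyzxy\approx yxzxy$.

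For part~(i), I would show that the interval $[\mathbf N,\mathbf N\vee\overleftarrow{\mathbf M}]$ has only two elements. Since $\mathrm{Id}(\mathbf N\vee\overleftarrow{\mathbf M})=\mathrm{Id}(\mathbf N)\cap\mathrm{Id}(\overleftarrow{\mathbf M})\subsetneq\mathrm{Id}(\mathbf N)$, there is at least one identity holding in $\mathbf N$ and failing in $\overleftarrow{\mathbf M}$; fix such a ``extremal'' identity $\pi$. Take any variety $\mathbf V$ with $\mathbf N\subseteq\mathbf V\subsetneq\mathbf N\vee\overleftarrow{\mathbf M}$; then $\mathbf V$ satisfies some identity $\mathbf u\approx\mathbf v$ that holds in $\mathbf N$ but fails in $\overleftarrow{\mathbf M}$. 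Normalizing $\mathbf u\approx\mathbf v$ modulo the identities of $\mathbf N\vee\overleftarrow{\mathbf M}$ via the word calculus, I expect it to collapse to $\pi$ (so that the information distinguishing $\mathbf N$ above $\mathbf N\vee\overleftarrow{\mathbf M}$ is, in effect, one–dimensional); then the base of $\mathbf N\vee\overleftarrow{\mathbf M}$ together with $\mathbf u\approx\mathbf v$ derives $\pi$, hence every identity of $\mathbf N$, so $\mathbf V=\mathbf N$. (Alternatively one could invoke a known criterion for when a variety is a minimal cover of another by a single identity.) The content is the combinatorial lemma that, modulo $\mathbf N\vee\overleftarrow{\mathbf M}$, \emph{every} identity of $\mathbf N$ that fails in $\overleftarrow{\mathbf M}$ is equivalent to $\pi$; this is precisely what forces the covering.

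For part~(ii), I would construct subvarieties $\mathbf K_1,\mathbf K_2,\dots$ of $\mathbf N\vee\overleftarrow{\mathbf M}$, each containing $\mathbf M\vee\overleftarrow{\mathbf M}$ and each generated by a small finite monoid (a Rees quotient of a free monoid by an ideal generated by finitely many words, in the usual fashion of this subject), and prove that the family is \emph{independent} over $\mathbf M\vee\overleftarrow{\mathbf M}$: $\mathbf K_n\not\subseteq\mathbf M\vee\overleftarrow{\mathbf M}\vee\bigvee_{m\ne n}\mathbf K_m$ for every $n$, while $\mathbf M\vee\overleftarrow{\mathbf M}\vee\bigvee_{n\ge1}\mathbf K_n=\mathbf N\vee\overleftarrow{\mathbf M}$. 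Then $X\mapsto\mathbf M\vee\overleftarrow{\mathbf M}\vee\bigvee_{n\in X}\mathbf K_n$ is an order embedding of $(\mathcal P(\mathbb N),\subseteq)$ into $[\mathbf M\vee\overleftarrow{\mathbf M},\mathbf N\vee\overleftarrow{\mathbf M}]$, which therefore has continuum many members; the images of the chains $\{1\}\subsetneq\{1,2\}\subsetneq\cdots$ and $\mathbb N\supsetneq\mathbb N\setminus\{1\}\supsetneq\mathbb N\setminus\{1,2\}\supsetneq\cdots$ form an infinite ascending and an infinite descending chain of varieties, so both chain conditions fail in the interval. (Equivalently one can describe the $\mathbf K_n$ by a sequence of identities $\alpha_n$ holding in $\mathbf M\vee\overleftarrow{\mathbf M}$ but independent over $\mathbf N\vee\overleftarrow{\mathbf M}$, and set $\mathbf V_X=(\mathbf N\vee\overleftarrow{\mathbf M})\cap\var\{\alpha_n:n\notin X\}$.)

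The main obstacle, in both parts, is the word combinatorics underlying $\Phi\cup\{\sigma_1,\sigma_2,\sigma_3\}$: proving that the informal rewriting rules genuinely decide the relevant identities, so that the identity sets of $\overleftarrow{\mathbf M}$, $\mathbf M\vee\overleftarrow{\mathbf M}$ and $\mathbf N\vee\overleftarrow{\mathbf M}$ can be computed and compared; for~(i), pinning the ``gap'' above $\mathbf N$ down to a single identity; and — the most technical point — building the independent monoids $\mathbf K_n$ and, for each $n$, exhibiting a substitution witnessing that some identity valid in $\mathbf M\vee\overleftarrow{\mathbf M}\vee\bigvee_{m\ne n}\mathbf K_m$ fails in $\mathbf K_n$. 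All of this is routine in spirit for this area but delicate in execution; the choice of the generating words for the $\mathbf K_n$ (presumably alternating, Zimin-like words interacting nontrivially with $\sigma_1,\sigma_2,\sigma_3$) is the crux of the argument.
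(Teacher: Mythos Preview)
Your overall architecture for part~(ii) is right and matches the paper's: one exhibits a family of identities $\alpha_n$ (the paper's $\mathbf a_n\approx\mathbf a_n'$), each holding in $\mathbf M\vee\overleftarrow{\mathbf M}$ but failing in $\mathbf N$, and proves that no $\alpha_n$ is derivable from $\Sigma\cup\{\alpha_m:m\neq n\}$, where $\Sigma$ is an identity base of $\mathbf N\vee\overleftarrow{\mathbf M}$. This gives the order embedding of $\mathcal P(\mathbb N)$ into the interval. However, the proposal stops precisely at the point where the real work begins: you acknowledge that constructing the words is ``the crux'' but do not attempt it, and your guess ``alternating, Zimin-like words'' substantially underestimates what is needed. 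The paper's words $\mathbf a_n$ are not Zimin words; they are built from iterated products indexed by tuples in $\{1,\dots,2n\}^k$ for $k$ ranging up to $2n$, so that $\mathbf a_n$ has length exponential in~$n$. The point of this intricate design is to force any substitution $\xi$ with $\xi(\mathbf a_k)$ a subword of (a word $\mathbf N\vee\overleftarrow{\mathbf M}$-equivalent to) $\mathbf a_n$ to satisfy $k=n$: for $k<n$ a pigeonhole on the $2n$ letters in a subblock against the $2k$ letters available forces some $\xi(x_1^{(i)})$ to have length $\ge 2$, which is excluded by a uniqueness-of-length-two-subwords property; for $k>n$ one first passes through an auxiliary substitution $\eta$ to simpler words $\mathbf b_n$ and then runs an inductive argument (the paper's Lemma~2.7) tracking images through $2n$ levels of nesting until a multiple letter is forced to land on a simple one. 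None of this machinery is visible in your plan, and without it the independence claim is an assertion, not a proof. Also, your side condition $\mathbf M\vee\overleftarrow{\mathbf M}\vee\bigvee_n\mathbf K_n=\mathbf N\vee\overleftarrow{\mathbf M}$ is neither needed nor established in the paper; the embedding of $\mathcal P(\mathbb N)$ alone yields the cardinality and both chain-condition failures.

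For part~(i) your route diverges from the paper's. You propose to show directly that, modulo $\mathrm{Id}(\mathbf N\vee\overleftarrow{\mathbf M})$, every identity of $\mathbf N$ failing in $\overleftarrow{\mathbf M}$ normalizes to a single $\pi$; this is plausible but you give no argument beyond ``I expect it to collapse.'' The paper does something structurally different and arguably cleaner: rather than analyzing the interval $[\mathbf N,\mathbf N\vee\overleftarrow{\mathbf M}]$ in isolation, it classifies \emph{all} subvarieties of $\mathbf N\vee\overleftarrow{\mathbf M}$, showing (using previously established lemmas on non-completely-regular subvarieties and on when $\sigma_1$ is forced) that any subvariety either lies below $\mathbf N$, lies below $\mathbf M\vee\overleftarrow{\mathbf M}$, or contains $\mathbf M\vee\overleftarrow{\mathbf M}$. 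The covering then falls out: if $\mathbf N\subsetneq\mathbf V\subseteq\mathbf N\vee\overleftarrow{\mathbf M}$ then $\mathbf V\nsubseteq\mathbf N$ and $\mathbf V\nsubseteq\mathbf M\vee\overleftarrow{\mathbf M}$ (since $\mathbf N\nsubseteq\mathbf M\vee\overleftarrow{\mathbf M}$), so $\mathbf M\vee\overleftarrow{\mathbf M}\subseteq\mathbf V$, whence $\mathbf N\vee\overleftarrow{\mathbf M}\subseteq\mathbf V$. Your identity-normalization approach could be made to work, but as written it is a hope rather than a lemma.
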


The proof of Theorem~\ref{main result} implies that the lattice $L(\mathbf N\vee\overleftarrow{\mathbf M})$ ''modulo'' the interval $[\mathbf M\vee\overleftarrow{\mathbf M},\mathbf N\vee\overleftarrow{\mathbf M}]$ has the form shown in Fig.~\ref{L(N vee dual M)}.

\begin{figure}[htb]
\unitlength=1mm
\linethickness{0.4pt}
\begin{center}
\begin{picture}(55,85)
\put(35,5){\circle*{1.33}}
\put(35,15){\circle*{1.33}}
\put(35,25){\circle*{1.33}}
\put(35,35){\circle*{1.33}}
\put(35,45){\circle*{1.33}}
\put(45,55){\circle*{1.33}}
\put(25,55){\circle*{1.33}}
\put(35,65){\circle*{1.33}}
\put(15,65){\circle*{1.33}}
\put(25,75){\circle*{1.33}}

\qbezier(25,75)(35,75)(35,65)
\qbezier(25,75)(25,65)(35,65)


\put(35,5){\line(0,1){40}}
\put(35,45){\line(-1,1){20}}
\put(35,45){\line(1,1){10}}
\put(25,55){\line(1,1){10}}
\put(25,55){\line(1,1){10}}
\put(15,65){\line(1,1){10}}
\put(45,55){\line(-1,1){10}}

\put(35,2){\makebox(0,0)[cc]{\textbf T}}
\put(37,15){\makebox(0,0)[lc]{\textbf{SL}}}
\put(37,25){\makebox(0,0)[lc]{$\mathbf C$}}
\put(37,35){\makebox(0,0)[lc]{$\mathbf D_1$}}
\put(37,45){\makebox(0,0)[lc]{$\mathbf D_2$}}
\put(23,55){\makebox(0,0)[rc]{$\mathbf M$}}
\put(13,65){\makebox(0,0)[rc]{$\mathbf N$}}
\put(47,55){\makebox(0,0)[lc]{$\overleftarrow{\mathbf M}$}}
\put(37,65){\makebox(0,0)[lc]{$\mathbf M\vee\overleftarrow{\mathbf M}$}}
\put(25,79){\makebox(0,0)[cc]{$\mathbf N\vee\overleftarrow{\mathbf M}$}}
\end{picture}
\end{center}
\caption{The lattice $L(\mathbf N\vee \dualM)$}
\label{L(N vee dual M)}
\end{figure}

We note also that one of the main goals of the work~\cite{Jackson-Lee-18} is to construct several examples of finitely generated monoid varieties with continuum many subvarieties. It is verified in Erratum to~\cite{Jackson-05} that $\overleftarrow{\mathbf M}$ and $\overleftarrow{\mathbf N}$ are finitely generated. Therefore, the variety $\mathbf N\vee\overleftarrow{\mathbf M}$ is finitely generated too. So, Theorem~\ref{main result} gives a new example of finitely generated variety of monoids with continuum many subvarieties. Besides that, this theorem provides some more new examples of finitely generated varieties of monoids with continuum many subvarieties (see~Corollary~\ref{D_2 join G} below). Since the variety $\mathbf N$ is finitely generated, this variety is locally finite. Moreover, $\mathbf N$ is finitely based and has only finite many subvarieties, i.e., it is a Cross variety. It follows that the covering of a Cross monoid variety can have a continuum cardinality subvariety lattice that violates the ascending chain condition and the descending chain condition. Since the cover $\mathbf M\vee\overleftarrow{\mathbf M}$ of the Cross varieties $\mathbf M$ and $\overleftarrow{\mathbf M}$ is non-finitely based~\cite{Jackson-05}, the class of Cross monoid varieties is not closed with respect to the formation of joins and of covers.

The article consists of three sections. Section~\ref{preliminaries} contains definitions, notation and auxiliary results, while Section~\ref{proof of theorem} is devoted to the proof of Theorem~\ref{main result}.

\section{Preliminaries}
\label{preliminaries}

\subsection{A useful construction}
\label{useful construction}

The following notion was introduced by Perkins~\cite{Perkins-69} and often appeared in the literature (see~\cite{Gusev-Vernikov-18,Jackson-05,Jackson-Lee-18,Jackson-Sapir-00}, for instance). Let $W$ be a set of possibly empty words. We denote by $\overline W$ the set of all subwords of words from $W$ and by $I\bigl(\,\overline W\,\bigr)$ the set $F^1 \setminus \overline W$. It is clear that $I\bigl(\,\overline W\,\bigr)$ is an ideal of $F^1$. Then $S(W)$ denotes the Rees quotient monoid $F^1/I\bigl(\,\overline W\,\bigr)$. If $W=\{{\bf w}_1,{\bf w}_2,\dots,{\bf w}_k\}$ then we will write $S\bigl({\bf w}_1,{\bf w}_2,\dots,{\bf w}_k\bigr)$ rather than $S\bigl(\{{\bf w}_1,{\bf w}_2,\dots,{\bf w}_k\}\bigr)$. A word \textbf w is called an \emph{isoterm} for a class of semigroups if no semigroup in the class satisfies any non-trivial identity of the form $\mathbf w\approx\mathbf w'$. 

\begin{lemma}
\label{S(W) in V}
Let $\mathbf V$ be a monoid variety and $W$ a set of possibly empty words. Then $S(W)$ lies in $\mathbf V$ if and only if each word in $W$ is an isoterm for $\mathbf V$.
\end{lemma}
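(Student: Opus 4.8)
The proof splits along the familiar lines of such ``isoterm'' arguments, so I would organize it as two implications.

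For the ``only if'' direction, suppose $S(W)\in\mathbf V$ and let $\mathbf w\in W$. If $\mathbf w\approx\mathbf w'$ held in $\mathbf V$, it would hold in $S(W)$; substituting the generators of $F^1$ for themselves (more precisely, the canonical images in $S(W)=F^1/I(\overline W)$), the left side $\mathbf w$ maps to a nonzero element of $S(W)$ because $\mathbf w\in\overline W$, so $\mathbf w'$ must map to the same nonzero element, which forces $\mathbf{w'=w}$ in $F^1$. Hence $\mathbf w$ is an isoterm for $\mathbf V$. One should note the degenerate case $\mathbf w=\lambda$: the empty word is always an isoterm for any monoid variety, and $S(W)$ containing the empty word as a subword is automatic, so nothing needs to be checked there; I would mention this briefly so the statement is literally correct.

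For the ``if'' direction, assume every word in $W$ is an isoterm for $\mathbf V$; I want to show $S(W)\in\mathbf V$, i.e.\ every identity $\mathbf u\approx\mathbf v$ holding in $\mathbf V$ also holds in $S(W)$. Take such an identity and an arbitrary substitution $\varphi$ from the letters into $S(W)$; extend $\varphi$ to a homomorphism $F^1\to S(W)$. If $\varphi(\mathbf u)=0$ I claim $\varphi(\mathbf v)=0$ as well, which gives $\varphi(\mathbf u)=\varphi(\mathbf v)$. So assume $\varphi(\mathbf u)\ne 0$. Lifting each $\varphi(\text{letter})$ to a word in $\overline W$ (representatives of the nonzero elements of $S(W)$), we obtain a substitution $\psi$ into $F^1$ with $\psi(\mathbf u)$ equal, in $S(W)$, to $\varphi(\mathbf u)\ne 0$; thus $\psi(\mathbf u)\in\overline W$, i.e.\ $\psi(\mathbf u)$ is a subword of some $\mathbf w\in W$. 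Now I use that $\mathbf V$ satisfies $\mathbf u\approx\mathbf v$ together with the standard fact that substitutions preserve identities in the \emph{word} monoid only up to the identities themselves — the key point being that since $\mathbf w$ is an isoterm, no nontrivial rewriting can occur. Concretely: writing $\mathbf w=\mathbf p\,\psi(\mathbf u)\,\mathbf q$ and applying the identity $\mathbf u\approx\mathbf v$ of $\mathbf V$ to the occurrence $\psi(\mathbf u)$ inside $\mathbf w$ would, if $\psi(\mathbf u)\ne\psi(\mathbf v)$ as words, yield a nontrivial identity $\mathbf w\approx\mathbf w'$ valid in $\mathbf V$, contradicting that $\mathbf w$ is an isoterm. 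Hence $\psi(\mathbf u)=\psi(\mathbf v)$ in $F^1$, and therefore $\varphi(\mathbf v)=\varphi(\mathbf u)$ in $S(W)$. Together with the $0$ case this shows $S(W)$ satisfies $\mathbf u\approx\mathbf v$, so $S(W)\in\mathbf V$.

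The one step that needs care — and which I expect to be the main obstacle to write cleanly — is the passage in the ``if'' direction from ``$\mathbf V$ satisfies $\mathbf u\approx\mathbf v$'' to ``$\psi(\mathbf u)=\psi(\mathbf v)$ as words'', because a priori the identity may be applied deep inside a product rather than to a whole factor. The correct formulation is that for any words $\mathbf a,\mathbf b$ and any substitution $\psi$, if $\mathbf V\models\mathbf u\approx\mathbf v$ then $\mathbf V\models \mathbf a\,\psi(\mathbf u)\,\mathbf b\approx\mathbf a\,\psi(\mathbf v)\,\mathbf b$; taking $\mathbf a\,\psi(\mathbf u)\,\mathbf b=\mathbf w$ an isoterm then forces $\mathbf a\,\psi(\mathbf u)\,\mathbf b=\mathbf a\,\psi(\mathbf v)\,\mathbf b$ as words, hence $\psi(\mathbf u)=\psi(\mathbf v)$. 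I would isolate this as the crux of the argument and keep the rest as routine bookkeeping about Rees quotients.
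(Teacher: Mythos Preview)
Your argument is correct and is the standard direct proof of this fact; the paper itself does not spell it out but simply reduces to the case $|W|=1$ (citing Jackson) and then invokes \cite[Lemma~5.3]{Jackson-Sapir-00} for sufficiency, declaring necessity obvious. So you are supplying in full what the paper outsources to the literature, and the underlying mechanism is the same.

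Two small points are worth tightening. First, your parenthetical that ``the empty word is always an isoterm for any monoid variety'' is false: any nontrivial periodic group variety (viewed as a monoid variety) satisfies $\lambda\approx x^n$. What is true --- and what you actually need in the necessity direction --- is that if $S(W)\in\mathbf V$ with $W\ne\emptyset$ then $\lambda$ is an isoterm for $\mathbf V$; this follows from your own substitution argument, since $S(W)$ then has a zero distinct from~$1$, and evaluating any $\lambda\approx\mathbf w'$ with a letter sent to~$0$ yields $1=0$.

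Second, in the sufficiency direction your lift $\psi$ is only guaranteed to exist on letters $x$ with $\varphi(x)\ne 0$; if some $x\in\con(\mathbf v)\setminus\con(\mathbf u)$ had $\varphi(x)=0$ you could not directly conclude that $\varphi(\mathbf v)$ equals the image of $\psi(\mathbf v)$. This gap closes itself under your hypotheses: once \emph{any} word is an isoterm for $\mathbf V$, every identity $\mathbf u\approx\mathbf v$ of $\mathbf V$ must have $\con(\mathbf u)=\con(\mathbf v)$ (otherwise $\mathbf V$ satisfies $\lambda\approx x^k$ for some $k\ge 1$, whence $\mathbf w\approx\mathbf w z^k$ holds for a fresh letter $z$ and no word is an isoterm). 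It is worth stating this explicitly so the lifting step is airtight.
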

 
\begin{proof}
It is easy to verify that it suffices to consider the case when $W$ consists of one word (see the paragraph after Lemma~3.3 in~\cite{Jackson-05}). Then necessity is obvious, while sufficiency is proved in~\cite[Lemma~5.3]{Jackson-Sapir-00}.
\end{proof}

The following statement is dual to Proposition~1 in Erratum to~\cite{Jackson-05}.
\begin{lemma}
\label{generator of M}
The variety $\mathbf M$ is generated by monoid $S(xysxty)$.\qed
\end{lemma}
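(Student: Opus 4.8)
The plan is to obtain the lemma from the dual result stated just before it, by a routine application of duality; no structural analysis of $S(xysxty)$ itself is required. Recall that the correspondence $\mathbf V\mapsto\overleftarrow{\mathbf V}$ is an automorphism of the lattice $\mathbb{MON}$ of all monoid varieties, and that it commutes with the formation of the variety generated by a class of monoids: the dual of the variety generated by a class $\mathcal K$ of monoids is the variety generated by the class $\{\overleftarrow S\mid S\in\mathcal K\}$, where $\overleftarrow S$ is the monoid antiisomorphic to $S$. Recall further that, for a word $\mathbf w$, the monoid $\overleftarrow{S(\mathbf w)}$ is isomorphic to $S(\overleftarrow{\mathbf w})$, where $\overleftarrow{\mathbf w}$ denotes $\mathbf w$ read from right to left. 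Indeed, the bijection of $F^1$ reversing every word is an antiisomorphism mapping the ideal $I(\overline{\{\mathbf w\}})$ onto $I(\overline{\{\overleftarrow{\mathbf w}\}})$, and hence inducing an antiisomorphism of the corresponding Rees quotient monoids.

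Granting these facts, the argument is immediate. Since the lemma is, by the sentence preceding it, the dual of Proposition~1 in the Erratum to~\cite{Jackson-05}, that proposition asserts that $\overleftarrow{\mathbf M}$ is generated by $\overleftarrow{S(xysxty)}$, which is isomorphic to $S(\overleftarrow{xysxty})$ with $\overleftarrow{xysxty}=ytxsyx$. Applying the automorphism $\mathbf V\mapsto\overleftarrow{\mathbf V}$ to this equality of varieties and using the two recalled facts, we get that $\mathbf M=\overleftarrow{\overleftarrow{\mathbf M}}$ is generated by $\overleftarrow{S(\overleftarrow{xysxty})}$, a monoid isomorphic to $S(xysxty)$; this is the assertion of the lemma.

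The only point that takes any effort is bookkeeping: one must confirm that the variety named in~\cite{Jackson-05} (and in its Erratum) really does coincide with $\overleftarrow{\mathbf M}$ as defined in the Introduction, i.e.\ that it is given by the identities obtained from $\Phi$, $xyzxy\approx yxzxy$ and $\sigma_2$ by reversing all the words involved, and that the generator exhibited there is the reverse word $S(\overleftarrow{xysxty})$. This is a purely mechanical comparison of definitions, and once it is carried out the duality argument above closes the proof. Alternatively, one could give a self-contained proof by using Lemma~\ref{S(W) in V} together with a direct syntactic analysis of the words that are isoterms for $\mathbf M$, but this merely reproves, in reverse, the work of~\cite{Jackson-05}, so invoking duality is the efficient route.
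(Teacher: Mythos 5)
Your proposal is correct and matches the paper's approach exactly: the paper offers no proof beyond the remark that the lemma is dual to Proposition~1 in the Erratum to~\cite{Jackson-05}, and your write-up simply makes explicit the standard duality machinery (that $\mathbf V\mapsto\overleftarrow{\mathbf V}$ is an automorphism of $\mathbb{MON}$ commuting with variety generation, and that $\overleftarrow{S(\mathbf w)}\cong S(\overleftarrow{\mathbf w})$ via word reversal) that the author leaves implicit. The bookkeeping you mention is consistent with the paper's own later use of the dual statement, where $\overleftarrow{\mathbf M}$ is said to be generated by $S(xsytxy)$, which is $S(ytxsyx)$ up to renaming of letters.
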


\subsection{Word problems for the varieties $\mathbf M$ and $\mathbf N$}
\label{word problems}

We introduce a series of new notions and notation. The set of all simple [multiple] letters in a word \textbf w is denoted by $\simple(\mathbf w)$ [respectively $\mul(\mathbf w)$]. The \emph{content} of a word \textbf w, i.e., the set of all letters occurring in $\bf w$, is denoted by $\con({\bf w})$. For a word $\mathbf w$ and letters $x_1,x_2,\dots,x_k\in \con(\mathbf w)$, let $\mathbf w(x_1,x_2,\dots,x_k)$ denotes the word obtained from $\mathbf w$ by retaining the letters $x_1,x_2,\dots,x_k$. 

Let $\mathbf w$ be a word and $\simple(\mathbf w)=\{t_1,t_2,\dots, t_m\}$. We can assume without loss of generality that $\mathbf w(t_1,t_2,\dots, t_m)=t_1t_2\cdots t_m$. Then $\mathbf w = \mathbf w_0 t_1 \mathbf w_1 \cdots t_m \mathbf w_m$ where $\mathbf w_0,\mathbf w_1,\dots,\mathbf w_m$ are possibly empty words and $t_0=\lambda$. The words $\mathbf w_0$, $\mathbf w_1$, \dots, $\mathbf w_m$ are called \emph{blocks} of $\mathbf w$, while $t_0,t_1,\dots,t_m$ are said to be \emph{dividers} of $\mathbf w$. The representation of the word \textbf w as a product of alternating dividers and blocks, starting with the divider $t_0$ and ending with the block $\mathbf w_m$ is called a \emph{decomposition} of the word \textbf w. A block of the word $\mathbf w$ is called a $k$-\emph{block} if this block consists of $k$th occurrences of letters in $\mathbf w$. If every block of the word $\mathbf w$ is either $1$-block or $2$-block then we say that the word $\mathbf w$ is \emph{reduced}. Recall that a word \textbf w is called \emph{linear} if every letter from $\con(\mathbf w)$ is simple in $\mathbf w$. We note that if $\mathbf w$ is a reduced word and $x$ is a multiple letter in $\mathbf w$ then $x$ cannot occur twice in the same block of $\mathbf w$. In other words, the following is true.
\begin{remark}
\label{blocks of a reduced word}
Every block of a reduced word is a linear word.
\end{remark} 

Further, let $\mathbf w$ be a reduced word. Let us consider an arbitrary $1$-block $\mathbf w_i$ of $\mathbf w$. The maximal subwords of this block consisting of the letters whose second occurrences in $\mathbf w$ lie in the same $2$-block of $\mathbf w$ are called \emph{subblocks of $1$-block} $\mathbf w_i$ of the word $\mathbf w$. The representation of $1$-block as a product of subblocks is called a \emph{decomposition} of this $1$-block. The notions of $2$-blocks and $2$-decompositions of the word $\mathbf w$ are defined dually. The representation of the reduced word $\mathbf w$ as a product of alternating dividers and decompositions of blocks is called \emph{full decomposition} of $\mathbf w$. Below we underline dividers to distinguish them from blocks and we divide subblocks of the same block by the symbol ''$|$''. We illustrate the introduced notions by the following

\begin{example}
\label{example blocks}
Put $\mathbf w=abcdxcbyezaed$. Clearly, $\simple(\mathbf w)=\{x,y,z\}$. Therefore, the letters $x$, $y$ and $z$ are the dividers of $\mathbf w$, while the words $abcd$, $cb$, $e$ and $aed$ are the blocks of $\mathbf w$. Evidently, the blocks $abcd$ and $e$ consist of the first occurrences of letters in $\mathbf w$, while the blocks $cb$ and $aed$ consist of the second occurrences of letters in $\mathbf w$. Consequently, $abcd$ and $e$ are the $1$-blocks of the word $\mathbf w$, while $cb$ and $aed$ are the $2$-blocks of this word. So, the word $\mathbf w$ is reduced. The second occurrences of the letters $a$, $b$ and the letters $c$, $d$ lie in the different $2$-blocks, while the second occurrences of the letters $b$ and $c$ lie in the same $2$-blocks. Therefore, the decomposition of $1$-block $abcd$ has the form $a\,|\,bc\,|\,d$. The decomposition of $2$-block $cb$ consist of one subblock because the first occurrences of letters $b$ and $c$ lie in the same $1$-block. Finally, the decomposition of $2$-block $aed$ equals $a\,|\,e\,|\,d$ because the first occurrences of the letters $a$, $d$ and the letter $e$ lie in different $1$-blocks. Thus, the full decomposition of the word $\mathbf w$ has the form $a\,|\,bc\,|\,d\underline x\,cb\,\underline y\,e\,\underline z\,a\,|\,e\,|\,d$.
\end{example}

An identity $\mathbf u\approx \mathbf v$ is called \emph{reduced} if the words $\mathbf u$ and $\mathbf v$ are reduced. The words $\mathbf u$ and $\mathbf v$ are said to be \emph{equivalent} if their decompositions are
\begin{align}
\label{decomposition of u}
&\mathbf u_0 t_1 \mathbf u_1 \cdots t_m \mathbf u_m,\\
\label{decomposition of v}
&\mathbf v_0 t_1 \mathbf v_1 \cdots t_m \mathbf v_m
\end{align}
respectively and $\con(\mathbf u_i)=\con(\mathbf v_i)$ for every $0\le i\le m$. Let $\mathbf u$ and $\mathbf v$ be equivalent reduced words. Suppose that~\eqref{decomposition of u} and~\eqref{decomposition of v} are the decompositions of these words respectively. In this case, the blocks $\mathbf u_i$ and $\mathbf v_i$ are said to be \emph{corresponding} to each other. Let's say that the corresponding blocks $\mathbf u_i$ and $\mathbf v_i$ are \emph{equivalent} if their decompositions are
\begin{align}
\label{decomposition of u_i}
&\mathbf u_{i1}\mathbf u_{i2}\dots \mathbf u_{ik_i},\\
\label{decomposition of v_i}
&\mathbf v_{i1}\mathbf v_{i2}\dots \mathbf v_{ik_i}
\end{align}
respectively and $\con(\mathbf u_{ij})=\con(\mathbf v_{ij})$ for every $1\le j\le k_i$. In this case, the subblocks $\mathbf u_{ij}$ and $\mathbf v_{ij}$ are said to be \emph{corresponding} to each other. The equivalent words $\mathbf u$ and $\mathbf v$ are said to be $1$-\emph{equivalent} if every two corresponding $1$-blocks of these words are equivalent each other. 

\begin{lemma}
\label{word problem M}
A reduced identity $\mathbf u\approx \mathbf v$ holds in the variety $\mathbf M$ if and only if the words $\mathbf u$ and $\mathbf v$ are $1$-equivalent. 
\end{lemma}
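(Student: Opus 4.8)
The plan is to prove the two implications separately: the ``if'' part by deducing $\mathbf u\approx\mathbf v$ from the defining identities of $\mathbf M$, and the ``only if'' part by producing, whenever $\mathbf u$ and $\mathbf v$ fail to be $1$-equivalent, a monoid of $\mathbf M$ together with a substitution separating $\mathbf u$ and $\mathbf v$ in it. For sufficiency I would first note that, on reduced words, the relation ``$1$-equivalent'' is generated by two elementary moves: (I) interchanging two letters that occur consecutively inside a single $2$-block, and (II) interchanging two letters that occur consecutively inside a single subblock of a single $1$-block. (That these moves generate the relation is seen as follows: if $\mathbf u$ and $\mathbf v$ are equivalent, then the partition of each $1$-block into subblocks is already forced by the common contents of the blocks; hence moves of type~(I) can be used to make the $2$-blocks of $\mathbf u$ literally coincide with those of $\mathbf v$, after which corresponding $1$-blocks carry the same ordered list of subblock contents and so differ only by permutations inside subblocks, i.e.\ by moves of type~(II).) It then suffices to realize a single move of either type inside $\mathbf M$. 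A move of type~(I) is, after substituting the auxiliary letters $t,z$ of $\sigma_2$ by the (possibly empty) subwords lying between the relevant occurrences, a single application of $\sigma_2$. A move of type~(II) interchanging consecutive letters $x$ and $y$ of a subblock is obtained in three steps: since the second occurrences of $x$ and $y$ lie in one common $2$-block, I would first use moves of type~(I) to bring those second occurrences next to each other in the order $xy$, so that the word contains a fragment $\mathbf a\,x\,y\,\mathbf b\,x\,y\,\mathbf c$ with $x,y\notin\con(\mathbf b)$; then the identity $xyzxy\approx yxzxy$ with $z\mapsto\mathbf b$ turns this fragment into $\mathbf a\,y\,x\,\mathbf b\,x\,y\,\mathbf c$; finally I would reverse the type-(I) rearrangement of the $2$-block.

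For necessity I would argue the contrapositive. By Lemma~\ref{generator of M} the identity $\mathbf u\approx\mathbf v$ holds in $S(xysxty)$; moreover the word $xysytx$ is an isoterm for $\mathbf M$ (a direct check shows that no defining identity of $\mathbf M$ can be applied to it nontrivially), so $S(xysytx)$ lies in $\mathbf M$ by Lemma~\ref{S(W) in V} and the identity holds in it as well. Going through the successive layers of the relation ``$1$-equivalent'', a routine sequence of substitutions — each one sending the one or two relevant letters to letters of the word $xysxty$ and every other letter to $\lambda$, and using that $S(xysxty)$ is a Rees quotient of $F^1$ — shows that $\con(\mathbf u)=\con(\mathbf v)$, then $\simple(\mathbf u)=\simple(\mathbf v)$, then the simple letters occur in $\mathbf u$ and in $\mathbf v$ in the same order (so that \eqref{decomposition of u} and \eqref{decomposition of v} are the decompositions of $\mathbf u$ and $\mathbf v$), and then $\con(\mathbf u_i)=\con(\mathbf v_i)$ for every $i$; in other words, $\mathbf u$ and $\mathbf v$ are equivalent.

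It remains to control the subblock structure. Suppose $\mathbf u$ and $\mathbf v$ are equivalent but not $1$-equivalent. Since the partition of each $1$-block into subblocks is forced by the block contents, the only possible discrepancy is the left-to-right order of the subblocks inside some $1$-block; hence there are multiple letters $a$ and $b$ whose first occurrences lie in that $1$-block, in distinct subblocks, with $a$ preceding $b$ inside $\mathbf u$ but $b$ preceding $a$ inside $\mathbf v$. Let the second occurrences of $a$ and of $b$ lie in the $2$-blocks numbered $q_a$ and $q_b$, respectively (the same numbers for $\mathbf u$ and for $\mathbf v$ by equivalence, and $q_a\ne q_b$ since $a$ and $b$ lie in distinct subblocks). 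If $q_a<q_b$, I would send $a\mapsto x$, $b\mapsto y$, the divider immediately following the $1$-block in question to $s$, the divider immediately following the $q_a$-th block to $t$ (both dividers exist because $a$ and $b$ have later second occurrences), and every remaining letter to $\lambda$: a direct check shows that this substitution maps $\mathbf u$ to the word $xysxty$ and $\mathbf v$ to the word $yxsxty$, which are distinct elements of $S(xysxty)$, contradicting $S(xysxty)\models\mathbf u\approx\mathbf v$. If $q_b<q_a$, the analogous substitution — now with the divider immediately following the $q_b$-th block playing the role of $t$ — maps $\mathbf u$ to $xysytx$ and $\mathbf v$ to $yxsytx$, again distinct elements, this time of $S(xysytx)$, which gives the same contradiction. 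Hence $\mathbf u$ and $\mathbf v$ must be $1$-equivalent.

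I expect this last step to be the main obstacle. The crucial point is that the order of the second occurrences of $a$ and $b$ need not agree with the order of the subblocks they determine; this is exactly what forces the split into the cases $q_a<q_b$ and $q_b<q_a$ and the use of the two different isoterms $xysxty$ and $xysytx$. One must also verify in each case that the chosen substitution really does erase everything except the intended length-six word. The remaining layers of the necessity argument, as well as the verification that ``$1$-equivalent'' is generated by the moves (I) and (II), should be routine once the definitions are unwound.
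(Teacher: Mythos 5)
Your sufficiency argument is a genuinely different (and workable) route: instead of the paper's device of showing that $xysxty$ is an isoterm for $\var\{\mathbf u\approx\mathbf v\}$ and then invoking Lemmas~\ref{S(W) in V} and~\ref{generator of M}, you derive $\mathbf u\approx\mathbf v$ directly from the identities $\sigma_2$ and $xyzxy\approx yxzxy$ of the basis of $\mathbf M$ via the elementary moves (I) and (II); note that the reduction to these moves needs nothing beyond the definition of $1$-equivalence (corresponding subblocks have equal contents), not the stronger claim you invoke for it. Likewise, replacing the paper's ``without loss of generality $j<k$'' by a second isoterm is legitimate: a direct check does confirm that none of the five defining identities of $\mathbf M$ applies nontrivially to $xysytx$, so $xysytx$ is an isoterm for $\mathbf M$ and $S(xysytx)\in\mathbf M$ by Lemma~\ref{S(W) in V}, and your two substitutions in the cases $q_a<q_b$ and $q_b<q_a$ do produce the required contradictions.

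The genuine gap is in the combinatorial step of the necessity. The claim that ``the partition of each $1$-block into subblocks is forced by the common contents of the blocks'' is false: the subblock partition depends on the arrangement of the letters inside the $1$-block, not only on which letters it contains. For instance, if the second occurrences of $a$ and $c$ lie in one $2$-block while the second occurrence of $b$ lies in another, then the $1$-block $abc$ decomposes as $a\,|\,b\,|\,c$, whereas $acb$ decomposes as $ac\,|\,b$; so two equivalent words can fail to be $1$-equivalent for reasons other than a reordering of subblocks. The same example shows that two distinct subblocks of one $1$-block may point to the same $2$-block, so your inference ``$q_a\ne q_b$ since $a$ and $b$ lie in distinct subblocks'' is invalid --- and this hypothesis is essential: if $q_a=q_b$, the pair $(a,b)$ yields no contradiction at all, because $\mathbf M$ satisfies $xysxy\approx yxsxy$ (substitute $z\mapsto s$ in its defining identity $xyzxy\approx yxzxy$), so the restricted words are not separated in $S(xysxty)$ or $S(xysytx)$. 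What you must actually establish (and what the paper asserts at the corresponding point) is: if $\mathbf u$ and $\mathbf v$ are equivalent but not $1$-equivalent, then some $1$-block contains two letters whose second occurrences lie in \emph{different} $2$-blocks and whose first occurrences appear in opposite orders in $\mathbf u$ and $\mathbf v$. This is true but requires its own short argument; for example, color each letter of a $1$-block by the index of the $2$-block containing its second occurrence, and check that the ordered decomposition into maximal monochromatic runs (i.e., into subblocks) is completely determined by the relative order of the pairs of differently colored letters, so that if every such pair is ordered alike in $\mathbf u$ and $\mathbf v$, the words are $1$-equivalent. With that statement proved and substituted for your ``forced partition'' claim, the remainder of your case analysis goes through.
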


\begin{proof}
\emph{Necessity}. Suppose the reduced identity $\mathbf u\approx \mathbf v$ holds in the variety $\mathbf M$.

First, we will prove that $\mathbf u$ and $\mathbf v$ are equivalent. It is proved in~\cite[Proposition 2.13]{Gusev-Vernikov-18} that an identity $\mathbf u\approx\mathbf v$ holds in the variety $\mathbf D_1$ if and only if $\simple(\mathbf u)=\simple(\mathbf v)$, $\mul(\mathbf u)=\mul(\mathbf v)$ and the simple letters appear in the words $\mathbf u$ and $\mathbf v$ in the same order. This fact and inclusion $\mathbf D_1\subset \mathbf M$ imply that if~\eqref{decomposition of u} is the decomposition of the word $\mathbf u$ then the decomposition of the word $\mathbf v$ has the form~\eqref{decomposition of v}. Let  $z\in\mul(\mathbf u)$. Suppose that, the first occurrence of $z$ in $\mathbf u$ lies in the block $\mathbf u_p$, while the second occurrence of $z$ in $\mathbf u$ lies in the block $\mathbf u_q$ for some $0\le p,q\le m$. Remark~\ref{blocks of a reduced word} implies that $p<q$. The word $ztz$ is an isoterm for the variety $\mathbf M$ by Lemmas~\ref{S(W) in V} and~\ref{generator of M}. Consequently, $\mathbf u(z,t_r)=zt_rz=\mathbf v(z,t_r)$ for every $p< r\le q$. Moreover, $\mathbf v(z,t_s)\ne zt_sz$ and $\mathbf v(z,t_\ell)\ne zt_\ell z$ for all $1\le s \le p$ and $q<\ell\le m$. Therefore, the first occurrence of $z$ in $\mathbf v$ lies in the block $\mathbf v_p$, while the second occurrence of $z$ in $\mathbf v$ lies in the block $\mathbf v_q$. This implies that the words $\mathbf u$ and $\mathbf v$ are equivalent.

Suppose now that the words $\mathbf u$ and $\mathbf v$ are not $1$-equivalent. Then there exist a $1$-block $\mathbf u_i$ of $\mathbf u$ and letters $x,y\in\con(\mathbf u_i)$ such that the second occurrences of $x$ and $y$ lie in different $2$-blocks of $\mathbf u$, say $\mathbf u_j$ and $\mathbf u_k$ respectively, while the first occurrence of $x$ precedes the first occurrence of $y$ in $\mathbf u$, but the first occurrence of $y$ precedes the first occurrence of $x$ in $\mathbf v$. In view of Remark~\ref{blocks of a reduced word}, we have that $i<j$. Besides that, we can assume without loss of generality that $j<k$. Since the words $\mathbf u$ and $\mathbf v$ are equivalent, $x\in\con(\mathbf v_i)\cap\con(\mathbf v_j)$ and $y\in\con(\mathbf v_i)\cap\con(\mathbf v_k)$. Then $\mathbf M$ satisfies the identity $\mathbf u(x,y,t_{i+1},t_{j+1})\approx \mathbf v(x,y,t_{i+1},t_{j+1})$, where
$$
\mathbf u(x,y,t_{i+1},t_{j+1})=xyt_{i+1}xt_{j+1}y\text{ and } \mathbf v(x,y,t_{i+1},t_{j+1})=yxt_{i+1}xt_{j+1}y,
$$
so that $xysxty$ is not an isoterm for $\mathbf M$. But this is impossible by Lemmas~\ref{S(W) in V} and~\ref{generator of M}.

\smallskip

\emph{Sufficiency}. Let~\eqref{decomposition of u} be the decomposition of $\mathbf u$. Since the words $\mathbf u$ and $\mathbf v$ are $1$-equivalent, the decomposition of $\mathbf v$ has the form~\eqref{decomposition of v}. We are going to verify that the word $xysxty$ is an isoterm for the variety $\var\{\mathbf u\approx \mathbf v\}$. Arguing by contradiction, we suppose that this variety satisfies a non-trivial identity $xysxty\approx \mathbf w$. We can assume without any loss that $xysxty=\mathbf a\xi(\mathbf u)\mathbf b$ and $\mathbf w=\mathbf a\xi(\mathbf v)\mathbf b$ for some words $\mathbf a$, $\mathbf b$ and some endomorphism $\xi$ of $F^1$. Since the identity $xysxty\approx \mathbf w$ is non-trivial and the words $\mathbf u$ and $\mathbf v$ are equivalent, there are letters $c,d\in\mul(\mathbf u)$ such that $x\in\con(\xi(c))$ and $y\in\con(\xi(d))$. Clearly, $c\ne d$, whence $\xi(c)=x$ and $\xi(d)=y$. This implies that $\mathbf a=\mathbf b=\lambda$ and $s=\xi(t_\ell)$, $t=\xi(t_r)$ for some $1\le \ell<r\le m$. Then, since the word $\mathbf u$ is reduced, $c\in\con(\mathbf u_i)\cap\con(\mathbf u_j)$ and $d\in\con(\mathbf u_p)\cap\con(\mathbf u_q)$ for some $i,j,p$ and $q$ such that $i\le p<\ell\le j<r\le q$. Further, we have that $c\in\con(\mathbf v_i)\cap\con(\mathbf v_j)$ and $d\in\con(\mathbf v_p)\cap\con(\mathbf v_q)$ because the words $\mathbf u$ and $\mathbf v$ are equivalent. If $i<p$ then the first occurrence of $c$ precedes the first occurrence of $d$ in $\mathbf v$. We obtain a contradiction with the fact that $\mathbf w\ne xysxty$. If $i=p$ then the first occurrence of $c$ precedes the first occurrence of $d$ in $\mathbf v$ again because the $\mathbf u$ and $\mathbf v$ are $1$-equivalent and the second occurrences of the letters $c$ and $d$ lie in different blocks of the words $\mathbf u$ and $\mathbf v$. So, the word $xysxty$ is an isoterm for $\var\{\mathbf u\approx \mathbf v\}$. In view of Lemmas~\ref{S(W) in V} and~\ref{generator of M}, the identity $\mathbf u\approx \mathbf v$ holds in $\mathbf M$.
\end{proof}

If \textbf u and \textbf v are words and $\varepsilon$ is an identity then we will write $\mathbf u\stackrel{\varepsilon}\approx\mathbf v$ in the case when the identity $\mathbf u\approx\mathbf v$ follows from $\varepsilon$.

\begin{lemma}
\label{word problem N}
A reduced identity $\mathbf u\approx \mathbf v$ holds in the variety $\mathbf N$ if and only if the words $\mathbf u$ and $\mathbf v$ are equivalent and corresponding $1$-blocks of the words $\mathbf u$ and $\mathbf v$ equal each other.
\end{lemma}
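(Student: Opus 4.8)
The plan is to prove Lemma~\ref{word problem N} by the same two-step strategy used for Lemma~\ref{word problem M}, exploiting the fact that $\mathbf N=\var\{\Phi,\sigma_2\}$ differs from $\mathbf M$ only by dropping the identity $xyzxy\approx yxzxy$. For necessity, suppose the reduced identity $\mathbf u\approx\mathbf v$ holds in $\mathbf N$. Since $\mathbf D_1\subset\mathbf N\subset\dots$, the same argument as in Lemma~\ref{word problem M} (using \cite[Proposition 2.13]{Gusev-Vernikov-18} to pin down the simple letters, and the fact that $ztz$ remains an isoterm for $\mathbf N$ because it is an isoterm for $\mathbf M$ and $\mathbf M\subseteq\mathbf N$ is false — rather $\mathbf N$ satisfies fewer identities, so $\mathbf N$ has MORE isoterms) shows $\mathbf u$ and $\mathbf v$ are equivalent. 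Then, to show corresponding $1$-blocks coincide as words, I would argue that if some corresponding $1$-blocks $\mathbf u_i$ and $\mathbf v_i$ differed, then since both are linear words (Remark~\ref{blocks of a reduced word}) with the same content, there are letters $x,y$ adjacent in one block whose first occurrences appear in opposite orders in $\mathbf u$ and $\mathbf v$; isolating $x,y$ together with suitable dividers and the relevant $2$-block occurrences would yield a consequence of $\mathbf u\approx\mathbf v$ that $\mathbf N$ does not satisfy — concretely one checks that the relevant short word (an appropriate specialization such as $xysxty$ but now also tracking the order of first occurrences within the $1$-block) is an isoterm for $\mathbf N$.

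For sufficiency, suppose $\mathbf u$ and $\mathbf v$ are equivalent, reduced, and every pair of corresponding $1$-blocks is literally equal. I would verify that $\mathbf u\approx\mathbf v$ follows from $\Phi\cup\{\sigma_2\}$ by a direct rewriting argument: since the $1$-blocks agree, the only discrepancies between $\mathbf u$ and $\mathbf v$ occur inside the $2$-blocks, i.e.\ in the order of \emph{second} occurrences of multiple letters within a common block. The identity $\sigma_2:\ xtyzxy\approx xtyzyx$ is precisely the tool for swapping adjacent second (non-first) occurrences of two multiple letters, and the identities in $\Phi$ (in particular $\sigma_3$ and $x^2yz\approx xyxzx$) handle the interaction between second occurrences and the surrounding structure. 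So I would show that any reduced word equivalent to $\mathbf u$ with the same $1$-blocks can be transformed into $\mathbf u$ by a sequence of such swaps, using the $\stackrel{\varepsilon}\approx$ notation just introduced to track individual rewriting steps.

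The main obstacle I expect is the necessity direction, specifically constructing the right "bad" isoterm witness for $\mathbf N$: unlike the $\mathbf M$ case where a single word $xysxty$ suffices, here I must detect a difference in the order of first occurrences within a $1$-block even when the second occurrences lie in possibly the same $2$-block, so the witness word and the verification that it is an isoterm for $\mathbf N$ (via Lemma~\ref{S(W) in V} and an explicit generator or a known basis for $\mathbf N$) is more delicate. A secondary difficulty is being careful about which isoterms $\mathbf N$ has versus $\mathbf M$: since $\mathbf N\supseteq\mathbf M$ as varieties is the wrong containment (in fact $\mathbf M\subset\mathbf N$, so $\mathbf N$ satisfies fewer identities and hence has more isoterms), one must recheck each isoterm claim directly for $\mathbf N$ rather than inheriting it from $\mathbf M$. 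For the sufficiency direction, the routine-but-lengthy part is verifying that the rewriting system generated by $\Phi\cup\{\sigma_2\}$ is rich enough to realize every permitted rearrangement of second occurrences, which I would organize as an induction on the number of "inversions" between second occurrences of $\mathbf u$ and $\mathbf v$ within corresponding blocks.
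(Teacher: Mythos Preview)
Your sufficiency plan is fine and in fact simpler than you anticipate: once the $1$-blocks literally agree, the only discrepancies between $\mathbf u$ and $\mathbf v$ are permutations inside corresponding $2$-blocks, and $\sigma_2$ alone (no $\Phi$, no $\sigma_3$) suffices to swap adjacent second occurrences and realize any such permutation. This is exactly how the paper does it.

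The necessity direction has a genuine gap. You plan to find letters $x,y$ in some $1$-block with swapped first occurrences and then exhibit an \emph{isoterm} for $\mathbf N$ that detects this. But the natural candidate, $xytxy$, is \emph{not} an isoterm for $\mathbf N$: the variety $\mathbf N$ satisfies $\sigma_2$, hence $xytxy\approx xytyx$. So an isoterm witness of the kind you describe does not exist, and the approach you flag as ``more delicate'' actually breaks. The paper avoids this entirely. It first invokes Lemma~\ref{word problem M} together with the inclusion $\mathbf M\subset\mathbf N$ to conclude that $\mathbf u$ and $\mathbf v$ are already \emph{$1$-equivalent}; this is crucial, because it localizes any discrepancy to a single \emph{subblock}, i.e.\ the offending letters $x,y$ have their second occurrences in the \emph{same} $2$-block. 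Restricting to $\{x,y,t_{i+1}\}$ then yields an identity $xyt_{i+1}\mathbf w\approx yxt_{i+1}\mathbf w'$ with $\mathbf w,\mathbf w'\in\{xy,yx\}$, and now the paper \emph{uses} $\sigma_2$ (which holds in $\mathbf N$) to normalize both tails to $xy$, obtaining $xyt_{i+1}xy\approx yxt_{i+1}xy$ in $\mathbf N$. That is precisely the extra identity defining $\mathbf M$ inside $\mathbf N$, contradicting $\mathbf M\ne\mathbf N$. In short: don't look for an isoterm; instead, first reduce via Lemma~\ref{word problem M} to the same-subblock case, then derive the forbidden identity $xyzxy\approx yxzxy$ and contradict $\mathbf M\subsetneq\mathbf N$.
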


\begin{proof}
\emph{Necessity}. Suppose the reduced identity $\mathbf u\approx \mathbf v$ holds in the variety $\mathbf N$. Lemma~\ref{word problem M} and inclusion $\mathbf M\subset \mathbf N$ imply that the words $\mathbf u$ and $\mathbf v$ are $1$-equivalent. In particular, the words $\mathbf u$ and $\mathbf v$ are equivalent. Then if~\eqref{decomposition of u} is the decomposition of the word $\mathbf u$ then the decomposition of the word $\mathbf v$ has the form~\eqref{decomposition of v}. Suppose that there exist two corresponding $1$-blocks $\mathbf u_i$ and $\mathbf v_i$ such that $\mathbf u_i\ne\mathbf v_i$. Clearly, $i<m$. Since the $1$-blocks $\mathbf u_i$ and $\mathbf v_i$  are $1$-equivalent, there is a subblock $\mathbf u'$ of the block $\mathbf u_i$ which does not coincide with the corresponding subblock $\mathbf v'$ of the block $\mathbf v_i$. In view of Remark~\ref{blocks of a reduced word}, the subblocks $\mathbf u'$ and $\mathbf v'$ are linear words. At the same time, $\con(\mathbf u')=\con(\mathbf v')$ because the blocks $\mathbf u_i$ and $\mathbf v_i$ are $1$-equivalent. Therefore, there exist letters $x$ and $y$ such that $x$ precedes $y$ in $\mathbf u'$, while $y$ precedes $x$ in $\mathbf v'$. Then $\mathbf N$ satisfies the identity
$$
xyt_{i+1}\mathbf w=\mathbf u(x,y,t_{i+1})\approx \mathbf v(x,y,t_{i+1})=yxt_{i+1}\mathbf w',
$$
where $\mathbf w,\mathbf w'\in\{xy,yx\}$. Then $\mathbf N$ satisfies the identities
$$
xyt_{i+1}xy\stackrel{\sigma_2}\approx xyt_{i+1}\mathbf w\approx yxt_{i+1}\mathbf w'\stackrel{\sigma_2}\approx yxt_{i+1}xy.
$$
We obtain a contradiction with the fact that the varieties $\mathbf M$ and $\mathbf N$ are different.

\smallskip

\emph{Sufficiency}. Let~\eqref{decomposition of u} be the decomposition of $\mathbf u$. Since the word $\mathbf u$ and $\mathbf v$ are equivalent, the decomposition of $\mathbf v$ has the form~\eqref{decomposition of v}. Consider arbitrary corresponding $2$-blocks $\mathbf u_i$ and $\mathbf v_i$. The words $\mathbf u_i$ and $\mathbf v_i$ are linear (see Remark~\ref{blocks of a reduced word}) and depend on the same letters (since $\mathbf u$ and $\mathbf v$ are equivalent). The identity $\sigma_2$ allows us to swap the second occurrences of two multiple letters whenever these occurrences are adjacent to each other. Thus, if we replace the $2$-block $\mathbf u_i$ by $\mathbf v_i$ in $\mathbf u$ then the word we obtain should be equal to $\mathbf u$ in $\mathbf N$. By the hypothesis, the corresponding $1$-blocks of the words $\mathbf u$ and $\mathbf v$ equal each other. Therefore, the identity
$$
\mathbf u= t_0\mathbf u_0 t_1 \mathbf u_1 \cdots t_m \mathbf u_m\stackrel{\sigma_2}\approx  t_0\mathbf v_0 t_1 \mathbf v_1 \cdots t_m \mathbf v_m=\mathbf v,
$$
holds in the variety $\mathbf N$.
\end{proof}

\subsection{Some words and their properties}
\label{some words}

We introduce some new notation. As usual, the symbol $\mathbb N$ stands for the set of all natural numbers. For all $n,k\in \mathbb N$, we put
$$
\mathcal M_n^k=\{1\}\times\underbrace{\mathbb N_n\times \mathbb N_n \times \cdots \times \mathbb N_n}_{k-1 \text{ copies}}.
$$
where $\mathbb N_n=\{1,2,\dots, n\}$. If $\gamma=(1,i_1,i_2,\dots,i_{k-1})\in \mathcal M_n^k$ and $1\le j\le n$ then we put 
$$
\gamma+j=(1,i_1,i_2,\dots,i_{k-1},j)\in \mathcal M_n^{k+1}.
$$
The usual lexicographical order is defined on the set $\mathcal M_n^k$. So, the expression
$$
\prod_{\gamma\in \mathcal M_n^k} \mathbf w_{\gamma}
$$
mean an abbreviated notation of the product of the words $\mathbf w_{\gamma}$ in ascending order $\gamma$. Put
\begin{align*}
\mathbf c_n=\prod_{\gamma\in \mathcal M_n^n} \biggl(\prod_{j=1}^n s_{\gamma}^{(j)}x_{\gamma}^{(j)}\biggr)
&\text{ and }
\mathbf d_n^{(k)}=s_k\cdot\biggl(\prod_{\gamma\in \mathcal M_n^k} \biggl(\prod_{j=1}^n x_{\gamma}^{(j)}\biggl(\prod_{\ell=1}^n x_{\gamma+j}^{(\ell)}\biggr)\biggr)\biggr),\\[-3pt]
\mathbf e_m=s_mx_mt_my_m &\text{ and }\mathbf f_m=s_mx_mx_{m+1}y_{m+1}y_m
\end{align*}
for all $n,k\ge 1$ and $m\ge 0$. Further, for any natural $n$ put
\begin{align*}
&\mathbf a_n=xy\cdot\biggl(\prod_{i=1}^n \mathbf d_{2n}^{(2i-1)}\biggr)\cdot \mathbf c_{2n}\cdot\biggl(\prod_{i=n-1}^1 \mathbf d_{2n}^{(2i)}\biggr)\cdot sx\cdot\biggl(\prod_{i=1}^{2n} x_1^{(i)}\biggr)\cdot y,\\[-3pt]
&\mathbf a_n'=yx\cdot\biggl(\prod_{i=1}^n \mathbf d_{2n}^{(2i-1)}\biggr)\cdot \mathbf c_{2n}\cdot\biggl(\prod_{i=n-1}^1 \mathbf d_{2n}^{(2i)}\biggr)\cdot sx\cdot\biggl(\prod_{i=1}^{2n} x_1^{(i)}\biggr)\cdot y,\\[-3pt]
&\mathbf b_n=x_0y_0\biggl(\prod_{i=1}^n \mathbf f_{2i-1}\biggr)\cdot \mathbf e_{2n}\cdot\biggl(\prod_{i=n-1}^0 \mathbf f_{2i}\biggr),\\[-3pt]
&\mathbf b_n'=y_0x_0\biggl(\prod_{i=1}^n \mathbf f_{2i-1}\biggr)\cdot \mathbf e_{2n}\cdot\biggl(\prod_{i=n-1}^0 \mathbf f_{2i}\biggr).
\end{align*}

We note that the words $\mathbf a_n$, $\mathbf a_n'$, $\mathbf b_n$ and $\mathbf b_n'$ are reduced. The following simple fact can be easily verified directly.

\begin{remark}
\label{rm s-decompositions of a_n,a_n'}
The expression
\begin{equation}
\label{s-decompositions of a_n,a_n'}
\begin{aligned}
&\chi(xy)\cdot\biggl(\prod_{i=1}^{n-1}\underline{s_i} \biggl(\prod_{\gamma\in \mathcal M_{2n}^{2i-1}}  \biggl(\prod_{j=1}^{2n} x_{\gamma}^{(j)}\,\biggl|\,\prod_{\ell=1}^{2n} x_{\gamma+j}^{(\ell)}\,\biggr|\,\biggr)\biggr)\biggr)\\[-3pt]
&\cdot\biggl(\underline{s_{2n-1}} \prod_{\gamma\in \mathcal M_{2n}^{2n-1}}  \biggl(\prod_{j=1}^{2n} x_{\gamma}^{(j)}\biggl(\prod_{\ell=1}^{2n}\,|\,x_{\gamma+j}^{(\ell)}\,|\,\biggr)\biggr)\biggr)\biggl(\prod_{\gamma\in \mathcal M_{2n}^{2n}} \biggl(\prod_{j=1}^{2n} \underline{s_{\gamma}^{(j)}}x_{\gamma}^{(j)}\biggr)\biggr)\\[-3pt] 
&\cdot\biggl(\prod_{i=n-1}^1 \underline{s_i}\biggl(\prod_{\gamma\in \mathcal M_{2n}^{2i}}  \biggl(\prod_{j=1}^{2n} x_{\gamma}^{(j)}\,\biggl|\,\prod_{\ell=1}^{2n} x_{\gamma+j}^{(\ell)}\,\biggr|\,\biggr)\biggr)\biggr)\cdot \underline{s}x\cdot\,\biggl|\,\prod_{i=1}^{2n} x_1^{(i)}\biggr|\,\cdot y
\end{aligned}
\end{equation}
is the full decomposition of $\mathbf a_n$ whenever $\chi(xy)=xy$, and the full decomposition of $\mathbf a_n'$ whenever $\chi(xy)=yx$.
\end{remark}

The following two observations play an important role below.

\begin{remark}
\label{subwords of word equivalent to b_n}
Suppose that
\begin{equation}
\label{equalities for b_n}
\{\zeta_\ell(x_{2\ell+1}),\zeta_\ell(y_{2\ell+1})\}=\{x_{2\ell+1},y_{2\ell+1}\}
\end{equation}
for all $0\le \ell\le n-1$. Then every subword of length $>1$ of the word
\begin{equation}
\label{word equivalent to b_n}
x_0y_0\biggl(\prod_{i=1}^n \mathbf f_{2i-1}\biggr)\cdot \mathbf e_{2n}\cdot\biggl(\prod_{i=n-1}^0 s_{2i}x_{2i}\zeta_i(x_{2i+1})\zeta_i(y_{2i+1})y_{2i}\biggr)
\end{equation}
has exactly one occurrence in this word.
\end{remark}

\begin{remark}
\label{subwords of word equivalent to a_n}
Suppose that
\begin{equation}
\label{equalities for a_n}
\{\zeta_\alpha(x_\alpha^{(i)})\mid 1\le i\le 2n\}=\{x_\alpha^{(i)}\mid 1\le i\le 2n\}
\end{equation}
for all odd $1\le\ell\le 2n-1$ and for all $\alpha\in \mathcal M_{2n}^\ell$. Then every subword of length $>1$ of the word
\begin{equation}
\label{word equivalent to a_n}
\begin{aligned}
&xy\cdot\biggl(\prod_{i=1}^{n-1}\underline{s_i} \biggl(\prod_{\gamma\in \mathcal M_{2n}^{2i-1}}  \biggl(\prod_{j=1}^{2n} x_{\gamma}^{(j)}\,\biggl|\,\prod_{\ell=1}^{2n} x_{\gamma+j}^{(\ell)}\,\biggr|\,\biggr)\biggr)\biggr)\\[-3pt]
&\cdot\biggl(\underline{s_{2n-1}}\prod_{\gamma\in \mathcal M_n^{2n-1}}  \biggl(\prod_{j=1}^{2n} x_{\gamma}^{(j)}\biggl(\prod_{\ell=1}^{2n}\,|\,x_{\gamma+j}^{(\ell)}\,|\,\biggr)\biggr)\biggr)\cdot\biggl(\prod_{\gamma\in \mathcal M_{2n}^{2n}} \biggl(\prod_{j=1}^{2n} \underline{s_{\gamma}^{(j)}}x_{\gamma}^{(j)}\biggr)\biggr)\\[-3pt]
&\cdot\biggl(\prod_{i=n-1}^1\underline{s_i}\biggl(\prod_{\gamma\in \mathcal M_{2n}^{2i}} \biggl(\prod_{j=1}^{2n} x_{\gamma}^{(j)}\,\biggl|\,\prod_{\ell=1}^{2n} \zeta_{\gamma+j}(x_{\gamma+j}^{(\ell)})\,\biggr|\,\biggr)\biggr)\biggr)\cdot \underline{s}x\cdot\,\biggl|\,\prod_{i=1}^{2n} \zeta_1(x_1^{(i)})\,\biggr|\,\cdot y
\end{aligned}
\end{equation}
has exactly one occurrence in this word.
\end{remark}

Remarks~\ref{subwords of word equivalent to b_n} and~\ref{subwords of word equivalent to a_n} follow from the directly verifiable fact that if $\mathbf w$ is one of the words~\eqref{word equivalent to b_n} or~\eqref{word equivalent to a_n} and $ab$ is a subword of the word $\mathbf w$ then this subwords has exactly one occurrence in this word.
\smallskip

\begin{lemma}
\label{substitution a_k in b_n} 
Let $n$ be a natural number, $\xi$ be an endomorphism of $F^1$ and $\mathbf w\approx \mathbf w'$ be a non-trivial identity. Suppose that the word $\mathbf w$ coincides with the word~\eqref{word equivalent to b_n} where $\zeta_1,\zeta_2,\dots, \zeta_{n-1}$ are endomorphisms of $F^1$ such that the equality~\eqref{equalities for b_n} is true for all $0\le\ell\le n-1$. Then if $\mathbf w=\mathbf u\xi(\mathbf a_k)\mathbf v$ and $\mathbf w'=\mathbf u\xi(\mathbf a_k')\mathbf v$ for some words $\mathbf u$ and $\mathbf v$ and some $k\ge n$ then $n=k$.
\end{lemma}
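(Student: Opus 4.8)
The overall plan is to establish $k\le n$; since $k\ge n$ is assumed, this forces $k=n$. The main tool is Remark~\ref{subwords of word equivalent to b_n}: as the endomorphisms $\zeta_i$ satisfy~\eqref{equalities for b_n}, no factor of $\mathbf w$ of length $>1$ occurs more than once in $\mathbf w$; moreover $\mathbf w$ is a reduced word, so every letter occurs in $\mathbf w$ at most twice.

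\emph{Step 1: $\xi$ is almost a letter substitution.} Since $\mathbf w\approx\mathbf w'$ is non-trivial and $\mathbf a_k'$ is obtained from $\mathbf a_k$ by transposing its first two letters $x$ and $y$, we have $\xi(x)\xi(y)\ne\xi(y)\xi(x)$, whence $\xi(x),\xi(y)\ne\lambda$. Each of the letters $x$, $y$, $x_\gamma^{(j)}$ occurs exactly twice in $\mathbf a_k$, and by inspection of the construction (cf.\ Remark~\ref{rm s-decompositions of a_n,a_n'}) its two occurrences lie in two disjoint factors among $xy$, $\mathbf d_{2k}^{(1)},\dots,\mathbf d_{2k}^{(2k-1)}$, $\mathbf c_{2k}$, $\mathbf d_{2k}^{(2k-2)},\dots,\mathbf d_{2k}^{(2)}$, $sx$, $\prod_{i=1}^{2k}x_1^{(i)}$ and the final letter $y$. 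Hence the $\xi$-image of such a letter $c$ occurs at two distinct positions of $\xi(\mathbf a_k)$, so of $\mathbf w$, and therefore $|\xi(c)|\le 1$. Thus $\xi(x)$ and $\xi(y)$ are single letters, distinct since they do not commute; put $a=\xi(x)$ and $b=\xi(y)$. As $a$ already occurs twice in $\xi(\mathbf a_k)$ (once in $xy$, once in $sx$) and $\mathbf w$ is reduced, no letter of $\mathbf a_k$ other than $x$ has $a$ in its image, and likewise for $b$ and $y$. Consequently, for every surviving letter $c$ (one with $\xi(c)\ne\lambda$), $\xi$ sends the first occurrence of $c$ in $\mathbf a_k$ to the first occurrence of $\xi(c)$ in $\mathbf w$, and the second to the second.

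\emph{Step 2: locating $\xi(\mathbf a_k)$ in $\mathbf w$.} The factor $ab$ is a prefix of $\xi(\mathbf a_k)$, so by Step~1 and Remark~\ref{subwords of word equivalent to b_n} it occurs in $\mathbf w$ exactly where $\xi(\mathbf a_k)$ begins, namely at the first occurrences of $a$ and of $b$ in $\mathbf w$. Running through the length-two factors of $\mathbf w$ coming from the block $x_0y_0$, from the blocks of the $\mathbf f_{2i-1}$, of $\mathbf e_{2n}$, and of the $\zeta$-modified $\mathbf f_{2i}$, one finds that $(a,b)$ must be one of $(x_0,y_0)$ or, for $1\le i\le n$, $(x_{2i-1},x_{2i})$, $(x_{2i},y_{2i})$, $(y_{2i},y_{2i-1})$. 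Since $\xi$ of the part of $\mathbf a_k$ strictly between $xy$ and $sx$ contributes no $a$ and no $b$ (Step~1), exactly one $b$ lies between the two occurrences of $a$ in $\mathbf w$; comparing this with the positions of the second occurrences of the candidate letters in $\mathbf w$ rules out the pairs $(x_{2i-1},x_{2i})$. (Which of the remaining pairs actually occurs will not matter.)

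\emph{Step 3: a depth comparison.} In $\mathbf a_k$ the first occurrences of all the $x_\gamma^{(j)}$ lie in the ``ascending'' blocks $\mathbf d_{2k}^{(1)},\mathbf d_{2k}^{(3)},\dots,\mathbf d_{2k}^{(2k-1)}$, while their second occurrences lie in $\mathbf c_{2k}$, in the ``descending'' blocks $\mathbf d_{2k}^{(2)},\dots,\mathbf d_{2k}^{(2k-2)}$, or in $\prod_{i=1}^{2k}x_1^{(i)}$; dually, in $\mathbf w$ the first occurrences of all multiple letters lie in $x_0y_0$ and in the $\mathbf f_{2i-1}$, and all second occurrences in $\mathbf e_{2n}$ and the $\zeta$-modified $\mathbf f_{2i}$. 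Moreover the letters $x_\delta^{(j)}$ of level $\ell+1$ link $\mathbf d_{2k}^{(\ell)}$ (first occurrence) to $\mathbf d_{2k}^{(\ell+1)}$ (second occurrence), with $\prod_i x_1^{(i)}$ and $\mathbf c_{2k}$ playing the roles of $\mathbf d_{2k}^{(0)}$ and $\mathbf d_{2k}^{(2k)}$; this exhibits a chain of $2k$ consecutively linked blocks in $\mathbf a_k$, whereas in $\mathbf w$ the pairs $x_m,y_m$ ($0\le m\le 2n$) form an analogous chain of only $2n+1$ links, with blocks $x_0y_0$, the $\mathbf f_{2i-1}$, $\mathbf e_{2n}$ and the $\zeta$-modified $\mathbf f_{2i}$. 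Using Step~1 and the facts just recorded, one shows that $\xi$ must push the first chain monotonically along the second, one block at a time, so that $2k\le 2n+1$ and hence $k\le n$. Carrying out this last argument rigorously is the crux of the proof: one has to control simultaneously how the simple letters $s_i$, $s_\gamma^{(j)}$, $s$ of $\mathbf a_k$ — whose images are not restricted by Step~1 — are distributed in $\mathbf w$, and to exclude the collapse of a whole level of $\mathbf a_k$ to $\lambda$, appealing repeatedly to Remark~\ref{subwords of word equivalent to b_n} to forbid two blocks $\mathbf d_{2k}^{(\ell)}$ from being conflated by $\xi$ and to forbid a single block of $\mathbf w$ from absorbing the linking letters of two distinct blocks $\mathbf d_{2k}^{(\ell)}$. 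The hypothesis $k\ge n$ is precisely what makes the comparison terminate with $2k\le 2n+1$ rather than with a weaker bound.
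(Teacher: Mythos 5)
Your Steps 1 and 2 are sound and essentially reproduce the paper's opening moves: non-triviality gives $\xi(x),\xi(y)\ne\lambda$, Remark~\ref{subwords of word equivalent to b_n} forces $\xi(x),\xi(y)$ (indeed the image of every multiple letter of $\mathbf a_k$) to be single letters, and since $\mathbf w$ is reduced these images are located at the first/second occurrences of the corresponding letters of $\mathbf w$. But two things go wrong after that. First, in Step 2 you leave the position of $\xi(xy)$ ambiguous and assert that ``which of the remaining pairs actually occurs will not matter'': your counting test does not exclude $(x_{2n},y_{2n})$ nor the pairs $(y_{2i},y_{2i-1})$, and the starting pair does matter, because the whole comparison is launched from the equality $\xi\bigl(\prod_{i=1}^{2k}x_1^{(i)}\bigr)=\mathbf z$, where $\mathbf z$ is the segment of $\mathbf w$ between the second occurrences of $\xi(x)$ and $\xi(y)$; only for the pair $(x_{2p},y_{2p})$ with $p<n$ is $\mathbf z=\zeta_p(x_{2p+1})\zeta_p(y_{2p+1})$, the shape needed to climb the chain. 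The paper settles this point by a tool you never invoke: since $\mathbf a_k$ and $\mathbf a_k'$ are $1$-equivalent, the identity $\mathbf w\approx\mathbf w'$ holds in $\mathbf M$, so by Lemma~\ref{word problem M} the swapped first occurrences must lie inside one subblock of a $1$-block of $\mathbf w$, and the full decomposition \eqref{decomposition of e-b_n} then gives exactly $\xi(xy)=x_{2p}y_{2p}$ with $0\le p<n$. (Your stray cases can also be eliminated directly, since for them $\mathbf z$ contains a simple letter of $\mathbf w$ which would have to be the image of some multiple letter $x_1^{(i)}$ of $\mathbf a_k$ --- but this has to be said, not waved away.)

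Second, and decisively, Step 3 is not a proof: you yourself write that ``carrying out this last argument rigorously is the crux of the proof'' and then do not carry it out. That missing step is precisely where the paper does its work: an induction on $q$, $1\le q\le 2n-2p$, treated separately for odd and even $q$, showing via the explicit segments \eqref{w_i[] short}, \eqref{w_i[] long} and Remark~\ref{subwords of word equivalent to b_n} that $x_{2p+q}=\xi(x_{\alpha_q}^{c_q})$ and $y_{2p+q}=\xi(x_{\beta_q}^{d_q})$ with $\alpha_q,\beta_q\in\mathcal M_{2k}^q$ and \eqref{condition for alpha_q beta_q}; the contradiction for $k>n$ is then that the part of $\mathbf a_k$ between the second occurrences of the level-$(2n-2p)$ letters consists of multiple letters only, yet its image must be the simple letter $t_{2n}$ of $\mathbf w$. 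Your sketch gestures at this ``depth comparison'' but does not control the collapse of letters to $\lambda$, the distribution of the simple letters $s_i$, $s_\gamma^{(j)}$, $s$, or the monotonicity you assert; moreover the bound ``$2k\le 2n+1$'' is not what the argument actually yields, and your description of the linking is reversed (the first occurrences of the level-$(\ell+1)$ letters lie in the odd-indexed, ascending blocks $\mathbf d_{2k}^{(\ell+1)}$, their second occurrences in $\mathbf d_{2k}^{(\ell)}$, $\mathbf c_{2k}$ or the suffix). So the proposal identifies the right strategy but leaves the central inductive argument, and the exact location of $\xi(xy)$ on which it depends, unproved.
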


\begin{proof}
We note that $\xi(x)\ne\lambda$ and $\xi(y)\ne\lambda$ because the identity $\mathbf w\approx \mathbf w'$ is non-trivial. It follows that the length of the word $\xi(xy)$ is more than $2$. In view of Remark~\ref{subwords of word equivalent to b_n}, the words $\xi(x)$ and $\xi(y)$ are letters. Remark~\ref{rm s-decompositions of a_n,a_n'} implies that the words $\mathbf a_k$ and $\mathbf a_k'$ are $1$-equivalent. According to Lemma~\ref{word problem M} the identity $\mathbf a_k\approx \mathbf a_k'$ holds in $\mathbf M$. Consequently, $\mathbf M$ satisfies the identity $\mathbf w\approx\mathbf w'$. Since the first occurrences of the letters $\xi(x)$ and $\xi(y)$ occur in the words $\mathbf w$ and $\mathbf w'$ in the opposite order, Lemma~\ref{word problem M} implies that some subblock of some $1$-block of the word $\mathbf w$ contains the subword $\xi(xy)$. It is easy to see that the full decomposition of the word $\mathbf w$ has the form
\begin{equation}
\label{decomposition of e-b_n}
\begin{aligned}
x_0y_0\cdot\biggl(\prod_{i=1}^{n-1} \underline{s_{2i-1}}x_{2i-1}\,|\,x_{2i}y_{2i}\,|\,y_{2i-1}\biggr)\cdot&\underline{s_{2n-1}}x_{2n-1}\,|\,x_{2n}\,|\,y_{2n}\,|\,y_{2n-1}\\[-3pt] 
\cdot\,\underline{s_{2n}}x_{2n}\underline{t_{2n}}y_{2n}\cdot\biggl(\prod_{i=n-1}^0 \underline{s_{2i}}x_{2i}&\,|\,\zeta_i(x_{2i+1})\zeta_i(y_{2i+1})\,|\,y_{2i}\biggr).
\end{aligned}
\end{equation}
Then $\xi(xy)=x_{2p}y_{2p}$ for some $0\le p< n$, whence $\xi(x)=x_{2p}$ and $\xi(y)=y_{2p}$. So, 
$$
\zeta_p(x_{2p+1})\zeta_p(y_{2p+1})=\prod_{i=1}^{2k} \xi(x_1^{(i)}).
$$
In view of Remark~\ref{subwords of word equivalent to b_n} and the equality~\eqref{equalities for b_n}, there are $c_1<d_1$ such that $\xi(x_1^{(c_1)})=\zeta_p(x_{2p+1})$ and $\xi(x_1^{(d_1)})=\zeta_p(y_{2p+1})$. Since the first occurrence of $x_1^{(c_1)}$ precedes the first occurrence of $x_1^{(d_1)}$ in $\mathbf a_k$, we have that the first occurrence of  $\xi(x_1^{(c_1)})$ precedes the first occurrence of  $\xi(x_1^{(d_1)})$ in $\mathbf w$, whence $\xi(x_1^{(c_1)})=x_{2p+1}$ and $\xi(x_1^{(d_1)})=y_{2p+1}$.

Further, we will prove by induction that for all $1\le q\le 2n-2p$ the equalities
\begin{equation}
\label{xi(x_2p+q) and xi(y_2p+q)}
x_{2p+q}=\xi(x_{\alpha_q}^{c_q}) \text{ and } y_{2p+q}=\xi(x_{\beta_q}^{d_q})
\end{equation}
are true for some letters $x_{\alpha_q}^{c_q}$ and $x_{\beta_q}^{d_q}$ such that $\alpha_q,\beta_q\in \mathcal M_{2k}^q$ and the first occurrence of $x_{\alpha_q}^{c_q}$ precedes the first occurrence of $x_{\beta_q}^{d_q}$ in $\mathbf w$, i.e., 
\begin{equation}
\label{condition for alpha_q beta_q}
\alpha_q\le\beta_q \text{ and if }\alpha_q=\beta_q \text{ then }c_q< d_q.
\end{equation}
The induction base is considered in the previous paragraph. Suppose that for all $1\le r< q\le 2n-2p$ there exist letters $x_{\alpha_r}^{c_r}$ and $x_{\beta_r}^{d_r}$ such that $x_{2p+r}=\xi(x_{\alpha_r}^{c_r})$ and $y_{2p+r}=\xi(x_{\beta_r}^{d_r})$, $\alpha_r,\beta_r\in \mathcal M_{2k}^r$, $\alpha_r\le\beta_r$ and if $\alpha_r=\beta_r$ then $c_r\le d_r$. We need to check that there are letters $x_{\alpha_q}^{c_q}$ and $x_{\beta_q}^{d_q}$ such that $\alpha_q,\beta_q\in \mathcal M_{2k}^q$ and the claims~\eqref{xi(x_2p+q) and xi(y_2p+q)} and~\eqref{condition for alpha_q beta_q} are true.

If $a,b\in\mul(\mathbf w)$ and $i\in\{1,2\}$ then $\mathbf w_i[a,b]$ denotes the subword of the word $\mathbf w$ located between $i$th occurrences of $a$ and $b$.

Suppose that $q$ is odd. Then 
$$
\zeta_{\frac{2p+q-1}{2}}(x_{2p+q})\zeta_{\frac{2p+q-1}{2}}(y_{2p+q})=\xi(\mathbf w_2[x_{\alpha_{q-1}}^{c_{q-1}},x_{\beta_{q-1}}^{d_{q-1}}]).
$$
We note that if $i=2$ and $\alpha_{q-1}=\beta_{q-1}$ then
\begin{equation}
\label{w_i[] short}
\mathbf w_i[x_{\alpha_{q-1}}^{c_{q-1}},x_{\beta_{q-1}}^{d_{q-1}}]=\biggl(\prod_{\ell=1}^{2k} x_{\alpha_{q-1}+c_{q-1}}^{(\ell)}\biggr)\cdot\biggl(\prod_{j=c_{q-1}+1}^{d_{q-1}-1} x_{\alpha_{q-1}}^{(j)}\biggl(\prod_{\ell=1}^{2k} x_{\alpha_{q-1}+j}^{(\ell)}\biggr)\biggr),
\end{equation}
while if $i=2$ and $\alpha_{q-1}<\beta_{q-1}$ then
\begin{equation}
\label{w_i[] long}
\begin{aligned}
&\mathbf w_i[x_{\alpha_{q-1}}^{c_{q-1}},x_{\beta_{q-1}}^{d_{q-1}}]=\biggl(\prod_{\ell=1}^{2k} x_{\alpha_{q-1}+c_{q-1}}^{(\ell)}\biggr)\cdot\biggl(\prod_{j=c_{q-1}+1}^{2k} x_{\alpha_{q-1}}^{(j)}\biggl(\prod_{\ell=1}^{2k} x_{\alpha_{q-1}+j}^{(\ell)}\biggr)\biggr)\\[-3pt]
&\cdot\biggl(\prod_{\alpha_{q-1}<\gamma<\beta_{q-1}}\biggl(\prod_{j=1}^{2k} x_{\gamma}^{(j)}\biggl(\prod_{\ell=1}^{2k} x_{\gamma+j}^{(\ell)}\biggr)\biggr)\biggr)\cdot\biggl(\prod_{j=1}^{d_{q-1}-1} x_{\beta_{q-1}}^{(j)}\biggl(\prod_{\ell=1}^{2k} x_{\beta_{q-1}+j}^{(\ell)}\biggr)\biggr).
\end{aligned}
\end{equation}
By the induction assumption, the endomorphism $\xi$ maps all letters located between the first occurrences of $x_{\alpha_{q-1}}^{(c_{q-1})}$ and $x_{\beta_{q-1}}^{(d_{q-1})}$ in $\mathbf w$ into the empty word, i.e., 
$$
\xi(\mathbf w_1[x_{\alpha_{q-1}}^{(c_{q-1})},x_{\beta_{q-1}}^{(d_{q-1})}])=\lambda.
$$
Then, taking into account the equalities~\eqref{w_i[] short} and~\eqref{w_i[] long}, we have that if $\alpha_{q-1}=\beta_{q-1}$ then the equality
\begin{equation}
\label{xi(w_i[]) short}
\xi(\mathbf w_i[x_{\alpha_{q-1}}^{c_{q-1}},x_{\beta_{q-1}}^{d_{q-1}}])=\prod_{j=c_{q-1}}^{d_{q-1}-1}\biggl(\prod_{\ell=1}^{2k} \xi(x_{\alpha_{q-1}+j}^{(\ell)})\biggr)
\end{equation}
with $i=2$ is true, while if $\alpha_{q-1}<\beta_{q-1}$ then the equality
\begin{equation}
\label{xi(w_i[]) long}
\begin{aligned}
&\xi(\mathbf w_i[x_{\alpha_{q-1}}^{c_{q-1}},x_{\beta_{q-1}}^{d_{q-1}}])=\biggl(\prod_{j=c_{q-1}}^{2k} \biggl(\prod_{\ell=1}^{2k} \xi(x_{\alpha_{q-1}+j}^{(\ell)})\biggr)\biggr)\\
&\cdot\biggl(\prod_{\alpha_{q-1}<\gamma<\beta_{q-1}}\biggl(\prod_{j=1}^{2k} \biggl(\prod_{\ell=1}^{2k} \xi(x_{\gamma+j}^{(\ell)})\biggr)\biggr)\biggr)\cdot
\biggl(\prod_{j=1}^{d_{q-1}-1} \biggl(\prod_{\ell=1}^{2k} \xi(x_{\beta_{q-1}+j}^{(\ell)})\biggr)\biggr)
\end{aligned}
\end{equation}
with $i=2$ is true. This fact and Remark~\ref{subwords of word equivalent to b_n} imply that there exist $\alpha_q,\beta_q\in \mathcal M_{2k}^q$ and $1\le c_q,d_q\le 2k$ such that the claim~\eqref{condition for alpha_q beta_q} is true, $\zeta_{\frac{2p+q-1}{2}}(x_{2p+q})=\xi(x_{\alpha_q}^{c_q})$ and $\zeta_{\frac{2p+q-1}{2}}(y_{2p+q})=\xi(x_{\beta_q}^{d_q})$. Since the first occurrence of $x_{\alpha_q}^{c_q}$ precedes the first occurrence of $x_{\beta_q}^{d_q}$ in $\mathbf a_k$, we obtain that the first occurrence of $\xi(x_{\alpha_q}^{c_q})$ precedes the first occurrence of $\xi(x_{\beta_q}^{d_q})$ in $\mathbf w$, whence $\xi(x_{\alpha_q}^{c_q})=\zeta_{\frac{2p+q-1}{2}}(x_{2p+q})=x_{2p+q}$ and $\xi(x_{\beta_q}^{d_q})=\zeta_{\frac{2p+q-1}{2}}(y_{2p+q})=y_{2p+q}$. So, we have proved the equality~\eqref{xi(x_2p+q) and xi(y_2p+q)} for all odd $q$. 

Suppose now that $q$ is even. Then $x_{2p+q}y_{2p+q}=\xi(\mathbf w_1[x_{\alpha_{q-1}}^{c_{q-1}},x_{\beta_{q-1}}^{d_{q-1}}])$. Note that if $\alpha_{q-1}=\beta_{q-1}$ then the equality~\eqref{w_i[] short} is true whenever $i=1$ and if $\alpha_{q-1}<\beta_{q-1}$ then the equality~\eqref{w_i[] long} with $i=1$ is true. By the induction assumption, the endomorphism $\xi$ maps all letters located between the second occurrences of $x_{\alpha_{q-1}}^{(c_{q-1})}$ and $x_{\beta_{q-1}}^{(d_{q-1})}$ in $\mathbf w$ into the empty word, i.e. $\xi(\mathbf w_2[x_{\alpha_{q-1}}^{(c_{q-1})},x_{\beta_{q-1}}^{(d_{q-1})}])=\lambda$. Then, taking into account the equalities~\eqref{w_i[] short} and~\eqref{w_i[] long}, we have that if $\alpha_{q-1}=\beta_{q-1}$ then the equality~\eqref{xi(w_i[]) short} with $i=1$ is true and if $\alpha_{q-1}<\beta_{q-1}$ then the equality~\eqref{xi(w_i[]) long} with $i=1$ is true. This fact and Remark~\ref{subwords of word equivalent to b_n} imply that there exist $\alpha_q,\beta_q\in \mathcal M_{2k}^q$ and $1\le c_q,d_q\le 2k$ such that the claim~\eqref{condition for alpha_q beta_q} is true, $x_{2p+q}=\xi(x_{\alpha_q}^{c_q})$ and $y_{2p+q}=\xi(x_{\beta_q}^{d_q})$.

Thus, we have shown that, for any $1\le q\le 2n-2p$, there are letters $x_{\alpha_q}^{c_q}$ and $x_{\beta_q}^{d_q}$ such that $\alpha_q,\beta_q\in \mathcal M_{2k}^q$ and the claims~\eqref{xi(x_2p+q) and xi(y_2p+q)} and~\eqref{condition for alpha_q beta_q} are true. In particular, $x_{2n}=\xi(x_{\alpha_{2n-2p}}^{c_{2n-2p}})$ and $y_{2n}=\xi(x_{\beta_{2n-2p}}^{d_{2n-2p}})$. It follows that $\xi(\mathbf w_2[x_{\alpha_{2n-2p}}^{c_{2n-2p}},x_{\beta_{2n-2p}}^{d_{2n-2p}}])=t_{2n}$. If $k>n$ then $\con(\mathbf w_2[x_{\alpha_{2n-2p}}^{c_{2n-2p}},x_{\beta_{2n-2p}}^{d_{2n-2p}}])\subseteq\mul(\mathbf a_k)$. This contradicts the fact that $t_{2n}\in\simple(\mathbf w)$.
\end{proof}

\section{Proof of the main result}
\label{proof of theorem}

(i) We are going to verify that the lattice $L(\mathbf N\vee\overleftarrow{\mathbf M})$  ''modulo'' the interval $[\mathbf M\vee\overleftarrow{\mathbf M},\mathbf N\vee\overleftarrow{\mathbf M}]$ has the form shown in Fig.~\ref{L(N vee dual M)}. In view of~\cite[Proposition~5.2]{Gusev-Vernikov-18} and~\cite[Fig.~1]{Jackson-05}, the lattices $L(\mathbf N)$ and $L(\mathbf M\vee\overleftarrow{\mathbf M})$ have the form shown in Fig.~\ref{L(N vee dual M)}. Let $\mathbf V$ be a proper subvariety of the variety $\mathbf N\vee\overleftarrow{\mathbf M}$ which is not contained in $\mathbf N$ and $\mathbf M\vee\overleftarrow{\mathbf M}$. We need to check that $\mathbf V$ belongs to the interval $[\mathbf M\vee\overleftarrow{\mathbf M},\mathbf N\vee\overleftarrow{\mathbf M}]$.

A variety of monoids is said to be \emph{completely regular} if it consists of \emph{completely regular monoids}~(i.e., unions of groups). If the variety $\mathbf V$ is completely regular then  it is a variety of \emph{bands} (i.e. idempotent monoids) because it satisfies the identity
\begin{equation}
\label{xx=xxx}
x^2\approx x^3.
\end{equation} 
Evidently, every variety of bands which satisfies the identity
\begin{equation}
\label{xxy=yxx}
x^2y\approx yx^2,
\end{equation} 
 is commutative. Therefore, $\mathbf V$ is one of the varieties $\mathbf T$ or $\mathbf{SL}$, a contradiction. So, $\mathbf V$ is non-completely regular. Suppose that $\mathbf D_2 \nsubseteq \mathbf V$. Then it follows from~\cite[Lemma 2.15]{Gusev-Vernikov-18} that $\mathbf V$ satisfies the identity
\begin{equation}
\label{xyx=x^qyx^r}
xyx\approx x^qyx^r
\end{equation}
where either $q>1$ or $r>1$. If $q>1$ then $\mathbf V$ satisfies the identities
$$
xyx\stackrel{\eqref{xyx=x^qyx^r}}\approx x^qyx^r\stackrel{\eqref{xx=xxx}}\approx x^2yx^r\stackrel{\eqref{xxy=yxx}}\approx yx^{2+r}\stackrel{\eqref{xx=xxx}}\approx yx^2\stackrel{\eqref{xxy=yxx}}\approx x^2y,
$$
whence $\mathbf V\subseteq \mathbf D_1$, a contradiction. If $r>1$ then the identities
$$
xyx\stackrel{\eqref{xyx=x^qyx^r}}\approx x^qyx^r\stackrel{\eqref{xx=xxx}}\approx x^qyx^2\stackrel{\eqref{xxy=yxx}}\approx x^{2+q}y\stackrel{\eqref{xx=xxx}}\approx x^2y\stackrel{\eqref{xxy=yxx}}\approx yx^2
$$
hold in $\mathbf V$. We obtain a contradiction again. Thus, $\mathbf D_2 \subseteq \mathbf V$. 

If $\mathbf M\nsubseteq\mathbf V$ then it follows from~\cite[Lemma~4.9(i)]{Gusev-Vernikov-18} that the variety $\mathbf V$ satisfies the identity $\sigma_1$. Therefore, $\mathbf V\subseteq \overleftarrow{\mathbf N}$. The lattice $L(\overleftarrow{\mathbf N})$ is isomorphic to the lattice $L(\mathbf N)$. Besides that, $\mathbf V\ne \overleftarrow{\mathbf N}$. This implies the wrong inclusion $\mathbf V\subseteq \overleftarrow{\mathbf M}\subset \mathbf M\vee\overleftarrow{\mathbf M}$. By symmetry, if $\overleftarrow{\mathbf M}\nsubseteq\mathbf V$ then $\mathbf V\subseteq\mathbf N$. This inclusion is also impossible. Thus, we have shown that $\mathbf M\vee\overleftarrow{\mathbf M}\subseteq \mathbf V$. Therefore, $\mathbf V\in[\mathbf M\vee\overleftarrow{\mathbf M},\mathbf N\vee\overleftarrow{\mathbf M}]$.

(ii) We denote an identity basis of the variety $\mathbf N\vee\overleftarrow{\mathbf M}$ by $\Sigma$. Let $\mathrm K$ be a subset of $\mathbb N$. Put $\Sigma_{\mathrm K}=\{\mathbf a_n\approx \mathbf a_n'\mid n \in \mathrm K\}$. We are going to verify that different subsets of the form $\Sigma_{\mathrm K}$ define different subvarieties within the variety $\mathbf N\vee\overleftarrow{\mathbf M}$. Arguing by contradiction, suppose that there are $n$ and $\mathrm K\subseteq \mathbb N$ such that $n\notin \mathrm K$ and the identity $\mathbf a_n\approx \mathbf a_n'$ follows from the identity system $\Sigma\cup \Sigma_{\mathrm K}$. Then there exists a sequence of words $\mathbf a_n= \mathbf w_0,\mathbf w_1,\dots,\mathbf w_m= \mathbf a_n'$ such that, for any $i\in\{0,1,\dots,m-1\}$ there exist words $\mathbf u_i,\mathbf v_i\in F^1$, an endomorphism $\xi_i$ of $F^1$ and an identity $\mathbf p_i\approx \mathbf q_i\in \Sigma\cup \Sigma_{\mathrm K}$ such that $\mathbf w_i= \mathbf u_i\xi_i(\mathbf p_i)\mathbf v_i$ and $\mathbf w_{i+1}= \mathbf u_i\xi_i(\mathbf q_i)\mathbf v_i$. We can assume without loss of generality that $\mathbf w_i\ne\mathbf w_{i+1}$ for all $i\in\{0,1,\dots,m-1\}$. The words $\mathbf a_n$ and $\mathbf a_n'$ are $1$-equivalent by Remark~\ref{rm s-decompositions of a_n,a_n'}. But the $1$-block $xy$ of $\mathbf a_n$ does not coincide with the corresponding $1$-block $yx$ of $\mathbf a_n'$. Then Lemma~\ref{word problem N} implies that the variety $\mathbf N$ violates the identity $\mathbf a_n\approx \mathbf a_n'$, whence there is a number $r\in\{0,1,\dots,m-1\}$ such that $\mathbf p_r\approx \mathbf q_r$ equals one of the identities $\mathbf a_k\approx \mathbf a_k'$ or $\mathbf a_k'\approx \mathbf a_k$ for some $k\ne n$. Let $r$ be the least number with such a property. Then the identity $\mathbf a_n\approx \mathbf w_r$ holds in the variety $\mathbf N\vee\overleftarrow{\mathbf M}$. In view of Remark~\ref{rm s-decompositions of a_n,a_n'}, the full decomposition of the word $\mathbf a_n$ has the form~\eqref{s-decompositions of a_n,a_n'} with $\chi(xy)=xy$. Then Lemma~\ref{word problem N} and the dual to Lemma~\ref{word problem M} imply that the word $\mathbf w_r$ coincides with~\eqref{word equivalent to a_n} where the equality~\eqref{equalities for a_n} is true for all odd $1\le\ell\le 2n-1$ and for all $\alpha\in \mathcal M_{2n}^\ell$.

We note that $\xi_r(x)\ne\lambda$ and $\xi_r(y)\ne\lambda$ because the identity $\mathbf w_r\approx \mathbf w_{r+1}$ is non-trivial. This implies that the length of the word $\xi_r(xy)$ is more than $2$. In view of Remark~\ref{subwords of word equivalent to a_n}, the words $\xi_r(x)$ and $\xi_r(y)$ are letters. Since the first occurrences of the letters $\xi_r(x)$ and $\xi_r(y)$ occur in the words $\mathbf w_r$ and $\mathbf w_{r+1}$ in the opposite order, Lemma~\ref{word problem M} implies that some subblock of some $1$-block of the word $\mathbf w_r$ contains the subword $\xi_r(xy)$ whenever $\mathbf p_r=\mathbf a_k$, and the subword $\xi_r(yx)$ whenever $\mathbf p_r=\mathbf a_k'$. Recall that the full decomposition of the word $\mathbf w_r$ has the form~\eqref{word equivalent to a_n}. Therefore, if the identity $\mathbf p_r\approx \mathbf q_r$ equals $\mathbf a_k'\approx \mathbf a_k$ then either $\xi_r(yx)=xy$ or $\xi_r(yx)=x_\gamma^{(p)}x_\gamma^{(p+1)}$ for some $1\le p< 2n$, $1\le h<n$ and $\gamma\in \mathcal M_{2n}^{2h}$. Then either $\xi_r(y)=x$ and $\xi_r(x)=y$ or $\xi_r(y)=x_\gamma^{(p)}$ and $\xi_r(x)=x_\gamma^{(p+1)}$. Since the second occurrence of $x$ precedes the second occurrence of $y$ in $\mathbf a_k'$, we have that the second occurrence of $\xi(x)$ precedes the second occurrence of $\xi(y)$ in $\mathbf w_r$. But this is impossible, because the second occurrence of $y$ is preceded by the second occurrence of $x$ in $\mathbf w_r$, while the second occurrence of $x_\gamma^{(p+1)}$ is preceded by the second occurrence of $x_\gamma^{(p)}$ in $\mathbf w_r$. So, the identity $\mathbf p_r\approx \mathbf q_r$ cannot coincide with the identity $\mathbf a_k'\approx \mathbf a_k$ and, therefore, $\mathbf p_r\approx \mathbf q_r$ equals $\mathbf a_k\approx \mathbf a_k'$.

Suppose that $k<n$. Since the full decomposition of the word $\mathbf w_r$ has the form~\eqref{word equivalent to a_n}, either $\xi_r(xy)=xy$ or $\xi_r(xy)=x_\gamma^{(p)}x_\gamma^{(p+1)}$ for some $1\le p< 2n$, $1\le h<n$ and $\gamma\in \mathcal M_{2n}^{2h}$. If $\xi(xy)=xy$ then
$$
\prod_{i=1}^{2k} \xi_r(x_1^{(i)})=\prod_{i=1}^{2n} \zeta_1(x_1^{(i)}).
$$
Since $k<n$, there is $1\le i\le 2k$ such that the length of the word $\xi(x_1^{(i)})$ is more than $1$. This contradicts the claim~\eqref{equalities for a_n} and Remark~\ref{subwords of word equivalent to a_n}. If $\xi_r(xy)=x_\gamma^{(p)}x_\gamma^{(p+1)}$ for some $1\le p< 2n$, $1\le h<n$ and $\gamma\in \mathcal M_{2n}^{2h}$ then
$$
\prod_{i=1}^{2k} \xi_r(x_1^{(i)})=\prod_{i=1}^{2n} \zeta_{\gamma+p}(x_{\gamma+p}^{(i)}).
$$
Taking into account the claim~\eqref{equalities for a_n}, we get a contradiction with Remark~\ref{subwords of word equivalent to a_n}.

Suppose now that $k>n$. We need some more notation. The smallest element of the set $\mathcal M_s^t$ is denoted by $\gamma_s^t$. Further, we denote by $\eta$ the endomorphism which is defined by the following equalities:
\begin{align*}
\eta(x)=x_0,\ \eta(y)=y_0,&\ \eta(s)=s_0,\ \eta(s_{q'})=s_{q'},\\
\eta(x_{\gamma_{2n}^q}^{(1)})=x_q,\ \eta(x_{\gamma_{2n}^q}^{(2)})=y_q,&
\ \eta(s_{\gamma_{2n}^{2n}}^{(1)})=s_{2n},\ \eta(s_{\gamma_{2n}^{2n}}^{(2)})=t_{2n},\\
\eta(z)&=\lambda,
\end{align*}
where $1\le q\le 2n$, $1\le q'\le 2n-1$ and $z$ is an arbitrary letter which differs from $x$, $y$, $s$, $s_{q'}$, $x_{\gamma_{2n}^q}^{(1)}$, $x_{\gamma_{2n}^q}^{(2)}$, $s_{\gamma_{2n}^{2n}}^{(1)}$ and $s_{\gamma_{2n}^{2n}}^{(2)}$.

We note that the word $\eta(\mathbf w_r)$ equals the word~\eqref{word equivalent to b_n} for some endomorphisms $\zeta_1,\zeta_2,\dots, \zeta_{n-1}$ of $F^1$ such that the equality~\eqref{equalities for b_n} is true for all $0\le\ell\le n-1$. Obviously, $\eta(\mathbf w_r)=\eta(\mathbf u_r)\eta(\xi_r(\mathbf p_r))\eta(\mathbf v_r)$ and $\eta(\mathbf w_{r+1})=\eta(\mathbf u_r)\eta(\xi_r(\mathbf q_r))\eta(\mathbf v_r)$. Note that the identity $\eta(\mathbf w_r)\approx \eta(\mathbf w_{r+1})$ is non-trivial because the first occurrences of the letters $\eta(\xi_r(x))$ and $\eta(\xi_r(y))$ occur in the words $\eta(\xi_r(\mathbf p_r))$ and $\eta(\xi_r(\mathbf q_r))$ in the opposite order. Then we have a contradiction with Lemma~\ref{substitution a_k in b_n} and inequality $n<k$. So, we have proved that different subsets of the form $\Sigma_{\mathrm K}$ define different subvarieties within the variety $\mathbf N\vee\overleftarrow{\mathbf M}$. All these subvarieties belongs to the interval $[\mathbf M\vee\overleftarrow{\mathbf M},\mathbf N\vee\overleftarrow{\mathbf M}]$ by Lemma~\ref{word problem M}, the dual to Lemma~\ref{word problem M} and Remark~\ref{rm s-decompositions of a_n,a_n'}. This implies that the lattice of all subsets of $\mathbb N$ order-embeds into the interval $[\mathbf M\vee\overleftarrow{\mathbf M},\mathbf N\vee\overleftarrow{\mathbf M}]$. It is well known that the lattice of all subsets of $\mathbb N$ is uncountable and violates the ascending chain condition and the descending chain condition. Theorem~\ref{main result} is proved.\qed

\smallskip

The following statement provide some more new examples of finitely generated varieties of monoids with continuum many subvarieties.

\begin{corollary}
\label{D_2 join G}
Let $G$ be a finite group that does not satisfy the identities $xytxy\approx yxtyx$ and $xytyx\approx yxtxy$. Then the monoid $S(xtx)\times G$ generates a variety with continuum many subvarieties.
\end{corollary}

\begin{proof}
Let $\mathbf G$ denote the monoid variety generated by $G$. We note that the monoid $S(xtx)$ generates the variety $\mathbf D_2$~\cite{Jackson-Sapir-00}. It follows that $\mathbf D_2\vee \mathbf G$ is generated by $S(xtx)\times G$. The word $xtx$ is an isoterm for $\mathbf D_2\vee \mathbf G$ by Lemma~\ref{S(W) in V}. Then~\cite[Fact~3.1(ii)]{Sapir-15} implies that if the variety $\mathbf D_2\vee \mathbf G$ satisfies a non-trivial identity $xysxty\approx \mathbf w$ then $\mathbf w=yxsxty$. But every group that satisfies the identity $\sigma_1$ is Abelian one, while it is evident that the group $G$ is non-Abelian. Hence $xysxty$ is an isoterm for $\mathbf D_2\vee \mathbf G$. According to Lemma~\ref{S(W) in V}, $S(xysxty)\in \mathbf D_2\vee \mathbf G$. Analogously, $S(xsytxy)\in \mathbf D_2\vee \mathbf G$. Further, since word $xtx$ is an isoterm for the variety $\mathbf D_2\vee \mathbf G$, if this variety satisfies an identity $xytxy\approx \mathbf v$ then $\mathbf v\in\{xytxy,xytyx,yxtxy,yxtyx\}$. The word $\mathbf v$ cannot coincide with $yxtyx$ because $G$ violates the identity $xytxy\approx yxtyx$. Let $m$ denote the exponent of $G$. If $\mathbf v=xytyx$, then $G$ satisfies the identities
\[
xy\approx (xy)^{m+1}\approx (xy)^m(yx)\approx yx
\]
contradicting the fact that $G$ is a non-Abelian group.
If $\mathbf v=yxtxy$, then $G$ satisfies the identities
\[
xy\approx (xy)^{m+1}\approx (yx)(xy)^m\approx yx
\]
contradicting the fact that the group $G$ is a non-Abelian again. Therefore, $\mathbf v=xytxy$. We see that $xytxy$ is an isoterm for $\mathbf D_2 \vee \mathbf G$. By similar arguments we can show that $xytyx$ is an isoterm for $\mathbf D_2\vee \mathbf G$ as well. Now Lemma~\ref{S(W) in V} applies, yielding that $S(xytxy,xytyx)\in\mathbf D_2\vee \mathbf G$.

In view of the dual to Lemma~\ref{generator of M}, $\overleftarrow{\mathbf M}$ is generated by $S(xsytxy)$, whence $\overleftarrow{\mathbf M}\subseteq \mathbf D_2\vee \mathbf G$. It follows from the dual to Example~1 in Erratum to~\cite{Jackson-05} that $\mathbf N$ is generated by the monoid $S(xysxty)$ together with some quotient monoid of $S(xytxy,xytyx)$. It follows that $\mathbf N\subseteq \mathbf D_2\vee \mathbf G$. Therefore, $\mathbf N\vee\overleftarrow{\mathbf M}\subseteq \mathbf D_2\vee \mathbf G$. Theorem~\ref{main result} implies that $\mathbf D_2\vee \mathbf G$ contains continuum many subvarieties.
\end{proof}

In conclusion, we note that Fig.~\ref{L(N vee dual M)} and Theorem~\ref{main result}(ii) imply that the lattice $L(\mathbf N\vee\overleftarrow{\mathbf M})$ is non-modular. The following question seems to be interesting.
\begin{question}
\label{question non-trivial}
Does the lattice $L(\mathbf N\vee\overleftarrow{\mathbf M})$ satisfy any non-trivial identity?
\end{question}

{\color{red}The previous version of this paper has been published in Siberian Electronic Math. Reports \textbf{16} (2019) 983--997. In this version, there is an error in the proof of Corollary~\ref{D_2 join G}. Namely, the following is claimed there: $\mathbf N\vee\overleftarrow{\mathbf M}$ is a subvariety of the variety generated by $S(xtx)\times G$ for any finite non-Abelian group $G$.
In fact, this result is wrong in general. 
For example, it is easy to see that the quaternion group
\[
Q_8=\langle i, j,k \mid i^2=j^2=k^2 = ijk\rangle
\]
satisfies the identity $xytxy\approx yxtyx$, whence $\mathbf N$ is not contained in the variety generated by $S(xtx)\times Q_8$.
As we have seen above, the discussed result is true whenever $G$ is a finite group  violated the identities $xytxy\approx yxtyx$ and $xytyx\approx yxtxy$.
}

\subsection*{Acknowledgments.} The author is sincerely grateful to Professor Vernikov for his attention and assistance in the writing of the article and to the anonymous referee for several useful remarks.


\begin{thebibliography}{99}
\label{bibl}
\bibitem{Evans-71}
T. Evans, {\it The lattice of semigroup varieties}, Semigroup Forum, \textbf2 (1971), 1--43.
\bibitem{Gusev-18-IzVUZ}
S.V. Gusev, {\it On the lattice of overcommutative varieties of monoids}, Izv. Vyssh. Uchebn. Zaved. Matem., \textbf5 (2018), 28--32 [Russian; Engl. translation: Russian Math. Iz. VUZ, \textbf{62}:5 (2018), 23--26].
\bibitem{Gusev-18-AU}
S.V. Gusev, {\it Special elements of the lattice of monoid varieties}, Algebra Univ., \textbf{97}:2, Article 29 (2018), 1--12.
\bibitem{Gusev-Vernikov-18}
S.V. Gusev, B.M. Vernikov, {\it Chain varieties of monoids}, Dissert. Math., \textbf{534} (2018), 1--73.
\bibitem{Head-68}
T.J. Head, {\it The varieties of commutative monoids}, Nieuw Arch. Wiskunde. III~Ser., \textbf{16} (1968), 203--206.
\bibitem{Jackson-05}
M. Jackson, {\it Finiteness properties of varieties and the restriction to finite algebras}, Semigroup Forum, \textbf{70}:2 (2005), 154--187; {\it Erratum to: Finiteness properties of varieties and the restriction to finite algebras}, Semigroup Forum, \textbf{96}:1 (2018), 197--198.
\bibitem{Jackson-Lee-18}
M. Jackson, E.W.H. Lee, {\it Monoid varieties with extreme properties}, Trans. Amer. Math. Soc., \textbf{370} (2018), 4785--4812.
\bibitem{Jackson-Sapir-00}
M. Jackson, O. Sapir, {\it Finitely based, finite sets of words}, Int. J. Algebra and Comput., \textbf{10} (2000), 683--708.
\bibitem{Kozhevnikov-12}
P.A. Kozhevnikov, {\it On nonfinitely based varieties of groups of large prime exponent}, Commun. in Algebra, \textbf{40} (2012), 2628--2644.
\bibitem{Lee-08}
E.W.H. Lee, {\it On the variety generated by some monoid of order five}, Acta Scientiarum Mathematicarum, \textbf{74}:3--4 (2008), 509--537.
\bibitem{Lee-12}
E.W.H. Lee, {\it Varieties generated by $2$-testable monoids}, Studia Sci. Math. Hungar., \textbf{49} (2012), 366--389.
\bibitem{Lee-14}
E.W.H. Lee, {\it Inherently non-finitely generated varieties of aperiodic monoids with central idempotents}, Zapiski Nauchnykh Seminarov POMI (Notes of Scientific Seminars of the St.~Petersburg Branch of the Math. Institute of the Russ. Acad. of Sci.), \textbf{423} (2014), 166--182.
\bibitem{Lvov-73}
I.V. L'vov, {\it Varieties of associative rings}.~I, Algebra i Logika, \textbf{12} (1973), 269--297 [Russian; Engl. translation: Algebra and Logic, \textbf{12} (1973), 150--167].
\bibitem{Perkins-69}
P. Perkins, {\it Bases for equational theories of semigroups}, J. Algebra, \textbf{11} (1969), 298--314.
\bibitem{Pollak-81}
Gy. Poll\'ak, {\it Some lattices of varieties containing elements without cover}, Quad. Ric. Sci., \textbf{109} (1981), 91--96.
\bibitem{Sapir-91}
M. Sapir, {\it On cross semigroup varieties and related questions}, Semigroup Forum, \textbf{42} (1991), 345--364.
\bibitem{Sapir-15}
O. Sapir, {\it Non-finitely based monoids}, Semigroup Forum, \textbf{90}:3 (2015), 557--586.
\bibitem{Shevrin-Vernikov-Volkov-09}
L.N. Shevrin, B.M. Vernikov, M.V. Volkov, {\it Lattices of semigroup varieties}, Izv. Vyssh. Uchebn. Zaved. Matem., \textbf3 (2009), 3--36 [Russian; Engl. translation: Russian Math. Iz. VUZ, \textbf{53}:3 (2009), 1--28].
\bibitem{Wismath-86}
S.L. Wismath, {\it The lattice of varieties and pseudovarieties of band monoids}, Semigroup Forum, \textbf{33} (1986), 187--198.
\end{thebibliography}
\end{document}